\documentclass[11pt]{article}

\textwidth 16cm
\textheight 23cm
\oddsidemargin 0cm
\evensidemargin 0cm
\topmargin -1.5cm

\usepackage{enumerate}
\usepackage{mathrsfs}
\usepackage{amsmath, color}
\usepackage{latexsym}
\usepackage{amssymb}
\usepackage{amsthm}
\usepackage{amsfonts}
\usepackage{dsfont}
\usepackage{hyperref}
\usepackage{xcolor,cancel}

\newtheorem{dfn}{Definition}[section]
\newtheorem{thm}[dfn]{Theorem}
\newtheorem{rmk}[dfn]{Remark}

\newtheorem{prop}[dfn]{Proposition}
\newtheorem{co}[dfn]{Corollary}
\newtheorem{lem}[dfn]{Lemma}
\theoremstyle{definition}

\numberwithin{equation}{section}

\newcommand{\IE}{{\mathbb{E}}}
\newcommand{\IP}{{\mathbb{P}}}
\newcommand{\IR}{{\mathbb{R}}}
\newcommand{\FF}{{\mathcal{F}}}
\newcommand{\EE}{{\mathcal{E}}}

\newcommand{\IZ}{{\mathbb{Z}}}

\def\wh{\widehat}
\def\wt{\widetilde}
\def\<{\langle}
\def\>{\rangle}

\title{\bf  Discrete Approximation to Brownian Motion with  Darning}
\date{\today}
\author{{\bf Shuwen Lou}}

\begin{document}

\maketitle

\begin{abstract}
Brownian motion with darning (BMD in abbreviation) is introduced and studied in \cite{Chen1} and \cite[Chapter 7]{CF}. Roughly speaking, BMD travels across the ``darning area" at infinite speed, while it behaves like a regular BM outside of this area.  In this paper we show that starting from a single point in its state space, BMD is the weak limit of a family of continuous-time simple random walks on square lattices with diminishing mesh sizes. From any vertex in their state spaces, the approximating random walks jump to its nearest neighbors with equal probability after an exponential holding time. 

\end{abstract}

\medskip
\noindent
{\bf AMS 2010 Mathematics Subject Classification}: Primary 60J27, 60J35; Secondary 31C25, 60J65.

\smallskip\noindent
{\bf Keywords and phrases}: Space of varying dimension, Brownian motion, random walk,  Dirichlet forms, heat kernel estimates,  isoperimetric inequality, Nash-type inequality,  tightness, Skorokhod space.

\section{Introduction}\label{Intro}

Brownian motion with darning  has been introduced and discussed in \cite{Chen1} and \cite[Chapter 7]{CF}. Its definition can be found in, e.g., \cite[Definition 1.1]{Chen1}.    In this paper, let $K\subset \IR^d$ be a compact  connected subset with Lipschitz-continuous boundary. At every $x\in \partial K$, $K$ satisfies the ``cone condition" (see, e.g., \cite[Proposition 1.22]{ChungZhao}),   it is thus  clear that every point on $\partial K$ is regular for $K$ in the sense that $\IP^x[\sigma_K=0]=1$. This allows us to define BMD by  identifying $K$ as a singleton $a^*$ and equipping $E:=(\IR^d\backslash K)\cup \{a^*\}$ with the topology induced from $\IR^d$ (see, e.g., \cite[\S 1.1]{Chen1}). In other words, the distribution of the process on $\IR^d\backslash K$ is the same as regular Brownian motion on $\IR^d$, but the ``darning area" $K$ offers zero resistance to the process. Diffusion processes with darning can be nicely characterized via Dirichlet forms and have been studied with depth in recent literatures, for example, \cite{Chen1, CF, CL}. In particular, we equip $E$ with a measure $m$ which is the same as the Lebesgue measure on $\IR^d\backslash K$, and does not charge $a^*$, Then the BMD on $E$ described above can be characterized by the following Dirichlet form on $L^2(E, m(dx))$:
\begin{equation}\label{def-BMD}
\left\{
\begin{aligned}
\mathcal{D}(\EE)&=   \left\{f: f\in W^{1,2}(\IR^d\backslash K), \, f \text{ is continuous on }E\right\} 
\\
\EE(f,g) &= \frac12 \int_{E}\nabla f(x) \cdot \nabla g(x) m(dx) . 
\end{aligned} 
\right.
\end{equation}

Recently, it was studied in \cite{Lou1, Lou2} how BMD can be approximated by continuous-time random walks on square lattices. However, the discussion in \cite{Lou1, Lou2} was only limited to the toy model of ``Brownian motion with varying dimension" where roughly speaking, the state space is $\IR^2\cup \IR_+$ with a disc  on $\IR^2$ identified with a singleton, i.e., the ``darning point. In this paper, using the method in \cite{Lou2}, we describe how BMD on $\IR^d$ with a darning area $K$ satisfying Lipschitz boundary condition can be approximated by random walks on square lattices. 
The results in this paper provide an intuition for the behavior of BMD upon hitting the ``darning point", and how it is affected by the geometric properties (or intuitively, the ``shape") of the boundary of the darning area.

We now introduce the state spaces of the  approximating random walks.  For every $j\in \mathbb{N}$,  let $K_j:=K\cap 2^{-j}\IZ^d$.  We identify all the  vertices  of $2^{-j}\IZ^d$ that are contained in the compact set $K$ as a singleton $a^*_j$. Let $E^j:=(2^{-j}\IZ^d\cap (\IR^d\backslash K))\cup \{a^*_j\}$.

Recall that in general, a graph $G$  can be written as ``$G=\{G_v, G_e\}$", where $G_v$ is its collection of vertices, and $G_e$ is its connection of edges. Given any two vertices in $a,b\in G$,  if there is unoriented edge with endpoints $a$ and $b$,  we say $a$ and $b$ are adjacent to each other in $G$, written  `` $a\leftrightarrow b$ in $G$".   One can  always assume   that given two vertices $a, b$ on a graph, there is at most  one such unoriented edge connecting these two points (otherwise edges with same endpoints can be removed and replaced with one single edge). This unoriented edge is denoted by $e_{ab}$ or $e_{ba}$ ($e_{ab}$ and $e_{ba}$ are viewed as the same elelment in $G_e$).  In this paper,  for notational convenience, we denote by $\mathcal{G}_j:=\{2^{-j}\IZ^d, \mathcal{V}_j\}$, where $\mathcal{V}_j$ is the collection of the edges of $2^{-j}\IZ^d$.

Next we introduce the graph structure on $E^j$. Denote by $G^j=\{G^j_v, G^j_e\}$ the  graph where $G^j_v=E^j$ is the collection of vertices and $G^j_e$ is the collection of unoriented edges over $E^j$  defined as follows:
\begin{align}
G^j_e:=&\{e_{xy}:\, \exists \,x,y\in 2^{-j}\IZ^d\cap (\IR^d\backslash K), |\,x-y|=2^{-j},\, e_{xy}\in\mathcal{V}_j,\, e_{xy}\cap K=\emptyset\} \nonumber
\\
\cup &\{e_{xa^*_j}:x\in  2^{-j}\IZ^d\cap (\IR^d\backslash K),\,\exists\text{  at least one edge  }e_{xy}\in \mathcal{V}_j \text{ such that }e_{xy}\cap K\neq \emptyset\}. \label{BMD-2}
\end{align}
  Note that $G^j=\{G^j_v, G^j_e\}$ is a connected graph. We emphasize that  given any $x\in G^j_v$, $x\neq a^*_j$, there is at most one unoriented edge in $G^j_e$ connecting $x$ and $a^*_j$.   Denote by $v_j(x)=\#\{e_{xy}\in G^j_e\}$, i.e., the number of vertices in $G^j_v$ adjacent to $x$.

In order to give definition to the approximating random walks for BMD, for every $j\ge 1$, we equip $E^j$  with the measure:
\begin{equation}\label{measure-E^j}
m_j(x):=\frac{2^{-jd}}{2d}\cdot v_j(x), \quad x\in E^j.
\end{equation}
Consider the following  Dirichlet form on $L^2(E^j, m_j)$:
\begin{align}\label{DF-RWVD-form}
\left\{
\begin{aligned}
&\mathcal{D}(\EE^{j})=L^2(E^j, m_j)
\\
&\EE^{j}(f, f)= \frac{2^{-(d-2)j}}{4d}\sum_{\substack{e^o_{xy}:\; e_{xy}\in G^j_e }} \left(f(x)-f(y)\right)^2,
\end{aligned}
\right.
\end{align}
where $e^o_{xy}$ denotes an {\it oriented edge} from  $x $  to  $y$. In other words, given any pair of  adjacent  vertices $x, y \in G^j_v$,  the edge with endpoints $x$ and $y$ is represented twice in the sum: $e^o_{xy}$ and $e^o_{yx}$.  One can verify that $(\EE^j, \mathcal{D}(\EE^{j}))$  is a regular symmetric Dirichlet form  on $L^2(E^j, m_j)$, therefore   there is  symmetric strong Markov process associated with it.  We denote this process   by $X^j$. In \S\ref{S2:Prelim}, we show that each $X^j$ is a continuous-time random walk whose tragectories of $X^j$  stay at each vertex of $E^j$ for an exponentially distributed holding time with parameter $2^{-2j}$ before jumping to one of its neighbors with equal probability.  The main result of this paper is  that the distributions of $\{X^j,\;j\ge 1\}$  converge weakly to the BMD characterized by \eqref{def-BMD}.

The rest of this paper is organized as follows. In \S\ref{S2:Prelim}, we first describe the behavior of $X^j$ by showing their roadmaps. Then we give a brief review on isoperimetric inequalities for weighted graphs, especially the isoperimetric inequalities for $\IZ^d$ equipped with equal weights. Using these results, in Proposition \ref{P:isoperimetric} we prove an isoperimetric inequality for $X^j$.  With the isoperimetric inequality obtained in \S\ref{S2:Prelim},   in \S\ref{S3:Nash} we derive a Nash-type inequality for the family of random walks $\{X^j,\;j\ge 1\}  $, from which we establish heat kernel upper bounds, first on-diagonal then off-diagonal, for the entire family of $\{X^j,\;j\ge 1\}$. In \S\ref{S4:Tightness}, we use the well-known criterion of tightness presented in \cite[Chapter VI, Proposition 3.21]{EK} to prove the tightness of $\{X^j,\;j\ge 1\}$.  The tightness criterion is verified in  Propositions \ref{P:4.6}-\ref{P:4.7}, which are proved using the heat kernel upper bounds obtained in \S\ref{S3:Nash}. Finally, the main result of convergence is given by Theorem \ref{T:main-result}.

In this paper we  follow the convention that in the statements of the theorems or propositions, the capital letters  $C_1, C_2, \cdots$ or $N_1, N_2, \cdots$ denote positive constants or positive integers, whereas in their proofs, the lower letters $c, c_1, \cdots$ or $n_1, n_2, \cdots$  denote positive constants or positive integers whose exact value is unimportant and may change   from line to line.

\section{Preliminaries}\label{S2:Prelim}

\subsection{Roadmap of the approximating random walks}

Suppose $\mathsf{E}$ is a locally compact separable metric space and $\{\mathsf{Q}(x, dy)\}$ is a probability kernel on $(\mathsf{E}, \mathcal{B}(\mathsf{E}))$ with $\mathsf{Q}(x, \{x\})=0$ for every $x\in \mathsf{E}$. Given a constant $\lambda >0$, we can construct a pure jump Markov process $\mathsf{X}$ as follows: Starting from $x_0\in \mathsf{E}$, $\mathsf{X}$ remains at $x_0$ for an exponentially distributed holding time $T_1$ with parameter $\lambda(x_0)$ (i.e., $\IE[T_1]=1/\lambda(x_0)$), then it jumps to some $x_1\in \mathsf{E}$ according to distribution $\mathsf{Q}(x_0, dy)$; it remains at $x_1$ for another exponentially distributed holding time $T_2$  also with  parameter $\lambda(x_1)$ before jumping to $x_2$ according to distribution $\mathsf{Q}(x_1, dy)$.  $T_2$ is independent of $T_1$. $\mathsf{X}$ then continues. The probability kernel $\mathsf{Q}(x, dy)$ is called the {\it roadmap}, i.e., the one-step distribution of $\mathsf{X}$, and the $\lambda(x)$ is its {\it speed function}. If there is a $\sigma$-finite measure $\mathsf{m}_0$ on $\mathsf{E}$ with supp$[\mathsf{m}_0]=\mathsf{E}$ such that
\begin{equation}\label{symmetrizing-meas}
\mathsf{Q}(x, dy)\mathsf{m}_0(dx)=\mathsf{Q}(y, dx)\mathsf{m}_0(dy),
\end{equation}
$\mathsf{m}_0$ is called a {\it symmetrizing measure} of the roadmap $\mathsf{Q}$. 
The following theorem is a restatement of  \cite[Theorem 2.2.2]{CF}.
\begin{thm}[\cite{CF}]\label{DF-pure-jump}
Given a speed function $\lambda >0$. Suppose \eqref{symmetrizing-meas} holds, then the reversible pure jump process $\mathsf{X}$ described above can be characterized by the following Dirichlet form $(\mathfrak{E}, \mathfrak{F})$ on $L^2(\mathsf{E}, \mathsf{m}(x))$ where the underlying reference measure is $\mathsf{m}(dx)=\lambda(x)^{-1}\mathsf{m}_0(dx)$ and
\begin{equation}\label{DF-EN}
\left\{
    \begin{aligned}
        &\mathfrak{F}=  L^2(\mathsf{E},\; \mathsf{m}(x)), \\
        &\mathfrak{E}(f,g) = \frac{1}{2} \int_{\mathsf{E}\times \mathsf{E}} (f(x)-f(y))(g(x)-g(y))\mathsf{Q}(x, dy)\mathsf{m}_0(dx).
    \end{aligned}
\right.
\end{equation}
\end{thm}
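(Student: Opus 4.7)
The plan is to verify that $(\mathfrak{E},\mathfrak{F})$ is a symmetric Dirichlet form on $L^2(\mathsf{E},\mathsf{m})$ and then to identify its $L^2$-generator with the formal generator of the pure jump process $\mathsf{X}$ built from $(\mathsf{Q},\lambda)$. Once both ingredients are in hand, the characterization follows from the well-known one-to-one correspondence between symmetric Markovian semigroups and symmetric Dirichlet forms on a fixed $L^2$-space.

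First I would check the form axioms. The symmetry $\mathfrak{E}(f,g)=\mathfrak{E}(g,f)$ is the only substantive algebraic point, and it is immediate from \eqref{symmetrizing-meas}: the integrand $(f(x)-f(y))(g(x)-g(y))$ is already symmetric in $(x,y)$, and \eqref{symmetrizing-meas} asserts precisely that the reference product kernel $\mathsf{Q}(x,dy)\mathsf{m}_0(dx)$ is invariant under the interchange of $x$ and $y$. The Markovian (unit-contraction) property is standard: for the normal contraction $\phi(t)=(0\vee t)\wedge 1$ the pointwise bound $|\phi(f(x))-\phi(f(y))|\le |f(x)-f(y)|$ transfers under the positive kernel to $\mathfrak{E}(\phi\circ f,\phi\circ f)\le \mathfrak{E}(f,f)$. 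Closedness is automatic since $\mathfrak{F}=L^2(\mathsf{E},\mathsf{m})$ and Fatou's lemma applied under the positive kernel gives lower semicontinuity of $\mathfrak{E}$ along $L^2$-convergent sequences.

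Next I would identify the generator. For sufficiently regular $f$, expanding the square in $\mathfrak{E}$ and applying \eqref{symmetrizing-meas} together with $\mathsf{Q}(x,\mathsf{E})=1$ gives
\begin{equation*}
\mathfrak{E}(f,g)=\int_{\mathsf{E}} g(x)\Bigl(f(x)-\int_{\mathsf{E}} f(y)\mathsf{Q}(x,dy)\Bigr)\mathsf{m}_0(dx)=-\int_{\mathsf{E}} g(x)\,\mathcal{A}f(x)\,\mathsf{m}(dx),
\end{equation*}
where $\mathcal{A}f(x):=\lambda(x)\int_{\mathsf{E}}(f(y)-f(x))\mathsf{Q}(x,dy)$ and $\mathsf{m}(dx)=\lambda(x)^{-1}\mathsf{m}_0(dx)$. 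Hence the $L^2$-generator of $(\mathfrak{E},\mathfrak{F})$ is $\mathcal{A}$. On the other hand, $\mathcal{A}$ is precisely the infinitesimal generator of the continuous-time jump chain with holding-time rate $\lambda(x)$ and jump kernel $\mathsf{Q}(x,dy)$, by the standard Kolmogorov forward equation computation (differentiating $P_tf(x)=\IE^x[f(\mathsf{X}_t)]$ at $t=0$ by conditioning on the first jump). Hille--Yosida then forces the two semigroups, and therefore the two processes, to coincide.

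The main obstacle I anticipate is not any single step in isolation but rather the bookkeeping needed to cope with possibly unbounded $\lambda$ and the attendant domain issues for $\mathcal{A}$: one must exhibit a sufficiently large core inside $\mathfrak{F}$ on which the formal integration-by-parts above is rigorously justified, together with the usual extension argument. In the applications of this paper, however, $\lambda$ is a constant on $E^j\setminus\{a^*_j\}$ and takes a single explicit value at $a^*_j$, so these subtleties collapse and the identification reduces to direct algebra on bounded functions of finite support.
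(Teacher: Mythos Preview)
The paper does not supply its own proof of this theorem: it is stated as a restatement of \cite[Theorem 2.2.2]{CF} and used as a black box to read off the roadmap in Proposition~\ref{P:roadmap}. So there is no argument in the paper to compare your proposal against.

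That said, your sketch is a correct outline of how the cited result is established. The symmetry of $\mathfrak{E}$ from \eqref{symmetrizing-meas}, the Markovian contraction property, and the generator identification $\mathfrak{E}(f,g)=-\int g\,\mathcal{A}f\,d\mathsf{m}$ with $\mathcal{A}f(x)=\lambda(x)\int(f(y)-f(x))\,\mathsf{Q}(x,dy)$ are all right, and matching $\mathcal{A}$ with the Kolmogorov generator of the holding-time construction is the standard route. One point you gloss over is that the assertion $\mathfrak{F}=L^2(\mathsf{E},\mathsf{m})$ implicitly requires $\mathfrak{E}$ to be a bounded form on $L^2(\mathsf{m})$; your Fatou argument gives closedness of $\mathfrak{E}$ on its natural domain but not that this domain is all of $L^2$. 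The inequality $\mathfrak{E}(f,f)\le 2\int f^2\,d\mathsf{m}_0=2\int \lambda f^2\,d\mathsf{m}$ shows boundedness exactly when $\lambda$ is bounded, which is the regime in \cite[Theorem 2.2.2]{CF} and is certainly the case in the paper's application, where $\lambda_j\equiv 2^{2j}$ is constant. Your closing remark already flags this, so the proposal is sound for the purposes at hand.
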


With the theorem above, we present the following proposition which states that at every vertex of $E^j$, $X^j$ holds for an exponential amount of time with mean $2^{-2j}$  before jumping to each of its nearest neighbors with equal probability. 

\begin{prop}\label{P:roadmap}
For every $j\ge 1$, $X^j$ has constant speed function $\lambda_j=2^{2j}$ and a roadmap
\begin{equation*}
Q_j(x, dy)= \sum_{\substack{z \in E^j\\ z\leftrightarrow x \text{ in }G^j}} q_j(x, z)\delta_{\{z\}}(dy),
\end{equation*}
where $q_j(x, y)=v_j(x)^{-1}$, for all $x, y\in E^j$.
\end{prop}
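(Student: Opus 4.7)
The plan is to read off the speed function and roadmap from the Dirichlet form \eqref{DF-RWVD-form} by directly matching it with the generic pure-jump form \eqref{DF-EN} of Theorem \ref{DF-pure-jump}. In other words, I propose the candidate $(\lambda_j,Q_j)$ in the statement, construct the corresponding symmetrizing measure $m_0^j$, verify \eqref{symmetrizing-meas}, plug into \eqref{DF-EN}, and check that the resulting form coincides with $(\EE^j,\mathcal{D}(\EE^j))$ on $L^2(E^j,m_j)$. Since a regular Dirichlet form uniquely determines the associated symmetric Markov process, this identifies $X^j$ with the pure-jump process having the claimed speed and roadmap.

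First I would check that $Q_j$ as proposed is an honest probability kernel on $E^j$ with $Q_j(x,\{x\})=0$: this is immediate from the definition, since $q_j(x,z)=v_j(x)^{-1}$ and there are exactly $v_j(x)$ vertices $z\in E^j$ with $z\leftrightarrow x$ in $G^j$. Next I would guess the symmetrizing measure. Setting
\[
m_0^j(\{x\}) := \frac{2^{-(d-2)j}}{2d}\, v_j(x), \qquad x\in E^j,
\]
the detailed balance identity \eqref{symmetrizing-meas} becomes, for any adjacent pair $x\leftrightarrow z$ in $G^j$,
\[
Q_j(x,\{z\})\, m_0^j(\{x\}) \;=\; \frac{1}{v_j(x)}\cdot \frac{2^{-(d-2)j}}{2d}\, v_j(x) \;=\; \frac{2^{-(d-2)j}}{2d},
\]
which is symmetric in $x$ and $z$; for non-adjacent (distinct) $x,z$ both sides vanish. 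So $m_0^j$ is a symmetrizing measure for $Q_j$. With the constant speed $\lambda_j=2^{2j}$ the reference measure of Theorem \ref{DF-pure-jump} is
\[
\mathsf{m}(dx)\;=\;\lambda_j^{-1}\,m_0^j(dx)\;=\;2^{-2j}\cdot \frac{2^{-(d-2)j}}{2d}v_j(x)\;=\;\frac{2^{-jd}}{2d}v_j(x)\;=\;m_j(x),
\]
which agrees with \eqref{measure-E^j}.

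Having matched the reference measures, I would substitute $(\lambda_j,Q_j,m_0^j)$ into the bilinear form in \eqref{DF-EN}. Since $Q_j(x,\cdot)$ is a counting measure weighted by $v_j(x)^{-1}$ on the neighbors of $x$,
\[
\mathfrak{E}(f,f) \;=\; \frac12\sum_{x\in E^j}\sum_{\substack{z\in E^j\\ z\leftrightarrow x}}(f(x)-f(z))^2\frac{1}{v_j(x)}\cdot\frac{2^{-(d-2)j}}{2d}v_j(x)\;=\;\frac{2^{-(d-2)j}}{4d}\sum_{e^o_{xy}:\,e_{xy}\in G^j_e}(f(x)-f(y))^2,
\]
where the factor $1/2$ combines with the double counting $e^o_{xy},e^o_{yx}$ to give the stated normalization; this is exactly $\EE^j(f,f)$ in \eqref{DF-RWVD-form}. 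The domains also match because $E^j$ is countable and $L^2(E^j,m_j)$ is the natural space on both sides.

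The only real subtlety, and the place I would be most careful, is the combinatorial bookkeeping of the factor $1/2$ versus the oriented-edge sum: the convention in \eqref{DF-RWVD-form} already counts each geometric edge twice, while the $1/2$ in \eqref{DF-EN} comes from the $Q_j(x,dy)m_0^j(dx)$ integration over ordered pairs; these two conventions have to reconcile, which they do exactly because of the choice of $m_0^j$ proportional to $v_j(x)$. Once this matching is in place, Theorem \ref{DF-pure-jump} identifies the Hunt process associated with $(\EE^j,\mathcal{D}(\EE^j))$ — which is $X^j$ — with the pure-jump chain of speed $\lambda_j=2^{2j}$ and roadmap $Q_j$, completing the proof.
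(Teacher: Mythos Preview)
Your proposal is correct and follows essentially the same approach as the paper: the paper's proof simply defines $m^0_j(x):=\lambda_j m_j(x)=2^{-(d-2)j}v_j(x)/(2d)$ and invokes Theorem~\ref{DF-pure-jump} together with \eqref{DF-RWVD-form}, which is exactly what you do, only with the detailed-balance and Dirichlet-form matching spelled out explicitly.
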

\begin{proof}
Define a measure $m^0_j(x):=\lambda_j m_j(x)=2^{-(d-2)j}v_j(x)/(2d)$. The conclusion follows immediately from \eqref{DF-RWVD-form} and  Theorem \ref{DF-EN}.
\end{proof}

\subsection{Isoperimetric inequalities for weighted graphs}

In this section we summarize some results about isoperimetric inequalitie for weighted  graphs in  \cite{MB}. In general, let $\Gamma$ be a locally finite connected graph, and let the collection of vertices of $\Gamma$ be denoted by $\mathbb{V}$. If two vertices $x,y\in \mathbb{V}$ are adjacent to each other, then the the unoriented edge connecting $x$ and $y$ is assigned a unique weight $\mu_{xy}>0$.   Set $\mu_{xy}=0$  if $x$ and $y$ are not adjacent in  $\Gamma$.   Denote by $\mu:=\{\mu_{xy}: x,y \text{ connected in }\Gamma\}$ the assignment of the weights on all the unoriented edges. $(\Gamma, \mu)$ is called a locally finite connected {\it weighted graph}. A weighted  graph  $(\Gamma, \mu)$ can be equipped with the  following  intrinsic   measure $\nu$ on $\mathbb{V}$:
\begin{equation}\label{meas-weighted-graph}
\nu (x):= \sum_{y\in \mathbb{V}: y\leftrightarrow x \text{ in }\Gamma} \mu_{xy}, \quad x\in \mathbb{V}.
\end{equation}
 Given two sets of vertices $A, B$ in $\mathbb{V}$, we define
\begin{equation}\label{def-mu-E}
\mu_\Gamma (A, B):=\sum_{x\in A}\sum_{y\in B}\mu_{xy}.
\end{equation}
The following definition of isoperimetric inequality is taken from \cite[Definition 3.1]{MB}.
\begin{dfn}
For $\alpha\in [1, \infty)$, we say that a weighted graph $(\Gamma, \mu)$ satisfies $\alpha$-isoperimetric inequality ($I_\alpha$) if there exists $C_0>0$ such that
\begin{equation*}
\frac{\mu_\Gamma(A, \mathbb{V}\backslash A)}{\nu(A)^{1-1/\alpha}} \ge C_0, \quad \text{for every finite non-empty }A\subset \mathbb{V}.
\end{equation*}
\end{dfn}

The following proposition  follows from the  combination of \cite[Theorem 3.7, Lemma 3.9, Theorem 3.14]{MB} and the proofs therein. It gives the relationship between Nash-type inequalities and isoperimetric inequalities for weighted graphs.

\begin{prop}[\cite{MB}]\label{MB-iso-implies-nash}
Let $(\Gamma, \mu)$ be a locally finite connected weighted graph satisfying $\alpha$-isoperimetric inequality  with constant $C_0$. Let $\nu$ be the measure defined in \eqref{meas-weighted-graph}. Then $(\Gamma, \mu)$ satisfies the following Nash-type inequality: 
\begin{equation*}
\frac{1}{2}\sum_{x\in \mathbb{V}}\sum_{y\in \mathbb{V}, y\leftrightarrow x} \left(f(x)-f(y)\right)^2\mu_{xy} \ge 4^{-(2+\alpha/2)}C_0^2 \|f\|_{L^2(\nu)}^{2+4/\alpha}\|f\|_{L^1(\nu)}^{-4/\alpha}, \quad f\in L^1(\nu)\cap L^2(\nu)
\end{equation*}
\end{prop}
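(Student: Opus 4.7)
I would follow the classical Federer--Fleming route in three structural steps: promote $(I_\alpha)$ to a functional $L^1$-Sobolev inequality; pass to an $L^2$-Sobolev inequality via the substitution $f\mapsto f^2$ together with Cauchy--Schwarz; and conclude by a H\"older interpolation between $L^1$ and the resulting Sobolev exponent.

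The first step is to establish, for every nonnegative finitely supported $f$, the $L^1$-Sobolev inequality
\[
\tfrac12 \sum_{x\in\mathbb{V}} \sum_{y\leftrightarrow x} |f(x)-f(y)|\,\mu_{xy} \;\ge\; C_0\,\|f\|_{L^{\alpha/(\alpha-1)}(\nu)}.
\]
Writing $F_t := \{x\in\mathbb{V}:f(x)>t\}$ and using the discrete co-area identity
\[
\tfrac12\sum_{x,y} |f(x)-f(y)|\,\mu_{xy} \;=\; \int_0^\infty \mu_\Gamma(F_t,\mathbb{V}\setminus F_t)\,dt,
\]
the hypothesis $(I_\alpha)$ gives $\mu_\Gamma(F_t,\mathbb{V}\setminus F_t)\ge C_0\,\nu(F_t)^{1-1/\alpha}$ for each $t$; a Minkowski-type layer-cake comparison, applied to $f=\int_0^\infty \mathbf{1}_{F_t}\,dt$, then yields $\int_0^\infty \nu(F_t)^{1-1/\alpha}\,dt \ge \|f\|_{L^{\alpha/(\alpha-1)}(\nu)}$.

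Next I would plug $f^2$ into this $L^1$-Sobolev inequality. The elementary bound $|f^2(x)-f^2(y)|\le|f(x)-f(y)|(|f(x)|+|f(y)|)$, Cauchy--Schwarz on the ordered-pair sum, and the identity $\sum_{x,y}(f(x)^2+f(y)^2)\mu_{xy}=2\|f\|_{L^2(\nu)}^2$ coming from \eqref{meas-weighted-graph} together yield an $L^2$-Sobolev inequality of the form
\[
C_0\,\|f\|_{L^{2\alpha/(\alpha-1)}(\nu)}^2 \;\le\; \Bigl(\sum_{x,y}(f(x)-f(y))^2\mu_{xy}\Bigr)^{1/2}\,\|f\|_{L^2(\nu)}.
\]
H\"older's inequality at exponent $2$, interpolated between the exponents $1$ and $2\alpha/(\alpha-1)$, gives $\|f\|_{L^2(\nu)}^2 \le \|f\|_{L^{2\alpha/(\alpha-1)}(\nu)}^{2\alpha/(\alpha+1)}\,\|f\|_{L^1(\nu)}^{2/(\alpha+1)}$; eliminating $\|f\|_{L^{2\alpha/(\alpha-1)}(\nu)}$ between the last two displays and collecting exponents produces the announced Nash-type bound with $\|f\|_{L^2(\nu)}^{2+4/\alpha}$ and $\|f\|_{L^1(\nu)}^{-4/\alpha}$.

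Each of the three structural steps is classical and none is a serious obstacle; the only non-routine part is the constant tracking required to land exactly on $4^{-(2+\alpha/2)}\,C_0^2$. The power of $4$ originates in the Cauchy--Schwarz step, where $\sum_{x,y}(|f(x)|+|f(y)|)^2\mu_{xy}\le 4\|f\|_{L^2(\nu)}^2$, and the $\alpha/2$ exponent on $4$ emerges when that loss is propagated through the interpolation raised to the power $2(\alpha+1)/\alpha$. Since \cite{MB} already carries out all three steps with this bookkeeping, I would simply assemble the cited pieces rather than redo the constant-tracking from scratch.
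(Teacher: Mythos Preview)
Your outline is correct and matches the paper's treatment: the paper does not give an independent proof but simply records that the inequality ``follows from the combination of \cite[Theorem~3.7, Lemma~3.9, Theorem~3.14]{MB} and the proofs therein,'' and those results in Barlow's book are precisely the three structural steps you describe (co-area/Federer--Fleming to get the $L^1$-Sobolev inequality, the $f\mapsto f^2$ plus Cauchy--Schwarz upgrade to $L^2$-Sobolev, and H\"older interpolation to Nash). Your closing remark that you would assemble the cited pieces rather than redo the constant bookkeeping is exactly what the paper does.
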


The next proposition follows immediately from \cite[Theorem 3.26]{MB}. As a notation in \cite{MB}, given a weighted graph $(\Gamma, \mu)$ with collection of vertices $\mathbb{V}$. 
 We denote the  counting measure times $2^{-jd}$ on  $2^{-j}\IZ^d$ by  $\mu_j$, which can be viewed as the measure ``$\nu$" in \eqref{meas-weighted-graph} corresponding to weighted  $2^{-j}\IZ^d$ with all edges weighing $2^{-jd}/2d$.
\begin{prop}[\cite{MB}]\label{iso-Z^2}
For $j\in \mathbb{N}$, let all edges of $2^{-j}\IZ^d$ be assigned with a weight  $2^{-jd}/2d$.    There exists a constant $C_1>0$ independent of $j$ such that  for any finite subset $A$ of $2^{-j}\IZ^d$, 
\begin{equation}\label{eq-iso-Z^d}
\mu_{2^{-j}\IZ^d}(A, 2^{-j}\IZ^d\backslash A)\ge C_1\cdot 2^{-j}  \mu_j (A)^{(d-1)/d}.
\end{equation}
\end{prop}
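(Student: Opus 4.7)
The plan is to reduce the claim to the standard (unweighted, unit-spacing) vertex-isoperimetric inequality on $\IZ^d$ and then track how the weights and vertex counts rescale. First I would cite the classical edge-isoperimetric inequality for $\IZ^d$ --- namely, that there exists a universal constant $c_0=c_0(d)>0$ such that every finite $A\subset \IZ^d$ satisfies $|\partial_E A|\ge c_0\,|A|^{(d-1)/d}$, where $\partial_E A$ counts the unoriented edges from $A$ to $\IZ^d\setminus A$. This is exactly the content of \cite[Theorem 3.26]{MB} for the unweighted integer lattice, and the paper is invoking it as a black box.

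Next I would exploit the obvious bijection $\varphi:2^{-j}\IZ^d\to \IZ^d$, $\varphi(x)=2^j x$, which is a graph isomorphism between $2^{-j}\IZ^d$ (with its nearest-neighbor edges) and $\IZ^d$. Under this identification, any finite $A\subset 2^{-j}\IZ^d$ corresponds to a finite subset $\varphi(A)\subset\IZ^d$ with $|\varphi(A)|=\#A$, and the number of unoriented edges crossing from $A$ to its complement equals $|\partial_E \varphi(A)|$. Then I would simply insert the weights: every vertex of $2^{-j}\IZ^d$ carries $\mu_j$-measure $2^{-jd}$, so $\mu_j(A)=2^{-jd}\#A$, and every boundary edge carries weight $2^{-jd}/(2d)$, so
\begin{equation*}
\mu_{2^{-j}\IZ^d}(A, 2^{-j}\IZ^d\setminus A)=\frac{2^{-jd}}{2d}\,|\partial_E \varphi(A)|.
\end{equation*}

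Applying the unscaled isoperimetric inequality to $\varphi(A)$ and substituting $\#A=2^{jd}\mu_j(A)$ gives
\begin{equation*}
\mu_{2^{-j}\IZ^d}(A, 2^{-j}\IZ^d\setminus A)\ge \frac{c_0}{2d}\,2^{-jd}\bigl(2^{jd}\mu_j(A)\bigr)^{(d-1)/d}=\frac{c_0}{2d}\,2^{-j}\,\mu_j(A)^{(d-1)/d},
\end{equation*}
which is the desired bound with $C_1:=c_0/(2d)$. There is really no obstacle here beyond bookkeeping: the only subtle issue is making sure the power of $2^{-j}$ comes out correctly, and that is dictated by the $(d-1)/d$ exponent swallowing one factor of $2^{-j}$ less than the ambient $2^{-jd}$, which is precisely why a single $2^{-j}$ (i.e.\ the mesh length) remains on the right-hand side --- consistent with the continuum isoperimetric inequality on $\IR^d$.
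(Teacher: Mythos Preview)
Your proposal is correct and is essentially what the paper does: the paper offers no proof at all, simply stating that the proposition ``follows immediately from \cite[Theorem 3.26]{MB}''. Your argument is exactly the rescaling computation one needs to pass from the unit-lattice isoperimetric inequality of that theorem to the weighted $2^{-j}\IZ^d$ statement, and the bookkeeping with the factors of $2^{-jd}$ and the exponent $(d-1)/d$ is right.
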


Before establishing  the isoperimetric inequality for $X_j$, we need the following proposition which will be used throughout this article.

\begin{prop}\label{P1}
There exist  $C_2>0$ and $N_0\in \mathbb{N}$ only depending on  the darning region $K$  such that for all $j\ge N_0$,
\begin{equation}
v_j(a^*_j)\le C_2\cdot 2^{j(d-1)}.
\end{equation}
\end{prop}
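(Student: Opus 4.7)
The plan is to observe that every vertex $x \in E^j$ adjacent to $a^*_j$ must lie very close to $\partial K$, and then to use the Lipschitz regularity of $\partial K$ to bound the number of such lattice points.

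First I would unwind the definition in \eqref{BMD-2}: if $x \leftrightarrow a^*_j$ in $G^j$, then $x \in 2^{-j}\IZ^d \cap (\IR^d\backslash K)$ and there exists a nearest lattice neighbor $y$ (with $|x-y|=2^{-j}$) such that the segment $e_{xy}$ meets $K$. In particular, the Euclidean distance from $x$ to $K$ is at most $2^{-j}$. Since $x \notin K$ and $K$ is closed, this forces $\operatorname{dist}(x,\partial K) \le 2^{-j}$. Hence every such $x$ lies in the tubular set $T_j := \{z\in\IR^d : \operatorname{dist}(z,\partial K)\le 2^{-j}\}$. To each such $x$ assign the half-open cube $Q_x := x + [0,2^{-j})^d$. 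The cubes $\{Q_x\}$ are pairwise disjoint, each has volume $2^{-jd}$, and each is contained in the enlarged tube $\widetilde T_j := \{z\in\IR^d : \operatorname{dist}(z,\partial K)\le (1+\sqrt d)\,2^{-j}\}$ since $\operatorname{diam}(Q_x) = \sqrt d \cdot 2^{-j}$. Therefore
\begin{equation*}
v_j(a^*_j)\cdot 2^{-jd} \;=\; \sum_{x\leftrightarrow a^*_j} |Q_x| \;\le\; |\widetilde T_j|.
\end{equation*}

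The main step is then to bound $|\widetilde T_j|$ using the Lipschitz hypothesis on $\partial K$. Because $\partial K$ is compact and locally the graph of a Lipschitz function, it can be covered by finitely many cylinders in which $\partial K$ is a Lipschitz graph; in each such cylinder, the volume of the $r$-neighborhood of the graph is bounded by $C\cdot r$ for $r$ small, by a direct integration using the Lipschitz constant. Summing over the finite cover yields a constant $c_K>0$ and $r_0>0$, depending only on $K$, such that $|\{z: \operatorname{dist}(z,\partial K)\le r\}| \le c_K \cdot r$ for all $r\le r_0$. Choosing $N_0$ so large that $(1+\sqrt d)\,2^{-N_0}\le r_0$, we obtain for all $j\ge N_0$ the estimate $|\widetilde T_j|\le c_K(1+\sqrt d)\,2^{-j}$.

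Combining the two displays gives
\begin{equation*}
v_j(a^*_j) \;\le\; 2^{jd}\,|\widetilde T_j| \;\le\; c_K(1+\sqrt d)\,2^{j(d-1)},
\end{equation*}
which is the desired inequality with $C_2 := c_K(1+\sqrt d)$. I expect the only nontrivial point is the Minkowski-type estimate $|\{z:\operatorname{dist}(z,\partial K)\le r\}|\le c_K r$ for Lipschitz $\partial K$; this is a standard consequence of the finite covering by Lipschitz charts and is where all the dependence on $K$ enters the constant $C_2$.
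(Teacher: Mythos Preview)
Your proof is correct and follows essentially the same strategy as the paper's: pack disjoint sets (you use half-open cubes $Q_x$, the paper uses balls $B(x,2^{-j}/2)$) around the vertices adjacent to $a^*_j$, observe that all of them lie inside a tube around $\partial K$ of width $\asymp 2^{-j}$, and then bound the volume of that tube by $c_K\cdot 2^{-j}$. The only cosmetic difference is that the paper phrases the tube-volume estimate via the $(d-1)$-dimensional Minkowski box dimension of $\partial K$ (covering $\partial K$ by $c\cdot 2^{j(d-1)}$ boxes of side $2^{-j}$), whereas you invoke the Lipschitz-chart description directly; these are equivalent standard facts, and in both cases all dependence on $K$ enters through the constant in the Minkowski-content bound.
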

\begin{proof}
In the following we denote the $d$- and $(d-1)$-dimensional Lebesgue measures by $m^{(d)}$ and $m^{(d-1)}$, respectively. For any two distinct $x, y\in E^j$  both adjacent to $a^*_j$, the two Euclidean balls $B_{|\cdot|}(x, \frac{2^{-j}}{2})$ and $B_{|\cdot|}(y, \frac{2^{-j}}{2})$ are disjoint. Also, for any $x\leftrightarrow  a^*_j$,  the Euclidean ball $B_{|\cdot|}(x, \frac{2^{-j}}{2})$ must be contained in the set $\{x\in \IR^d:  d_{|\cdot|}(x, \partial K)\le 2\cdot 2^{-j}\}$. Therefore, 
\begin{equation}\label{28}
v_j(a^*_j)\cdot m^{(d)}\left(B_{|\cdot|}\left(x, \frac{2^{-j}}{2}\right)\right)\le m^{(d)}\left(\left\{x\in \IR^d:d_{|\cdot|}(x, \partial K)\le 2\cdot 2^{-j}   \right\}\right).
\end{equation}
Since $K$ has Lipschitz-continuous boundary in $\IR^d$, $\partial K$ is $(d-1)$-dimensional in the sense of both topological and  Minkowski box dimension. This means that there exists $j_0\in \mathbb{N}$ and a constant $c>0$ such that for all $j\ge j_0$, $\partial K$ can be covered by $c\cdot 2^{j(d-1)}$ many boxes with side lenght $2^{-j}$. This further  implies that   set $\{x\in \IR^d:d_{|\cdot|}(x, \partial K)\le 2\cdot 2^{-j} \}$ can be covered by boxes with the same centers but side length $16\cdot 2^{-j}$, which further implies that for some $c>0$,
\begin{equation*}
 m^{(d)}\left(\left\{x\in \IR^d:d_{|\cdot|}(x, \partial K)\le 2\cdot 2^{-j}   \right\}\right) \le c\cdot 2^{j(d-1)}\cdot 2^{-jd} =c\cdot 2^{-j}, \quad j\ge j_0.
\end{equation*}
The conclusion thus follows on account of \eqref{28} and the fact that $m^{(d)}(B(x, 2^{-j}/2))=s(d-1)\cdot 2^{-(j+1)d}$ for all $x\in \IR^d$, where $s(d-1)>0$ is a constant equal to the $(d-1)$-dimensional surface measure. 
\end{proof}

Recall the graph structure on $E^j$ defined in  \eqref{BMD-2}.  In the next proposition we establish an isoperimetric inequality for $X^j$ on the weighted graph $E^j$, where all the edges in $G^j_e$ are equipped with an equal weight of $2^{jd}/(2d)$.

\begin{prop}\label{P:isoperimetric}
For every $j\in \mathbb{N}$,   let all edges of $E^j$   be equipped with an equal weight  $2^{-j}/(2d)$, which is consistent with the definition of $m_j$ in the sense that
\begin{equation*}
m_j(x)=\frac{2^{-jd}}{2d}\cdot \#\left\{e_{xy} \in G^j\right\}.
\end{equation*}
 For the $N_0$ specified in Proposition \ref{P1}, there exist  an  integer $N_1\ge N_0$  and  a constant $C_2>0$ independent of $j$    such that for all $j\ge N_1$,
\begin{equation}\label{iso}
\mu_{E^j}(A, E^j\backslash A)  \ge 2^{-j}C_2m_j(A)^{(d-1)/d}, \quad \text{for any finite set }A\subset E^j.
\end{equation}
\end{prop}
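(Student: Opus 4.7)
My plan is to reduce \eqref{iso} to the isoperimetric inequality for $2^{-j}\IZ^d$ already recorded in Proposition \ref{iso-Z^2}. Given a finite $A\subset E^j$, I lift it to a subset of $2^{-j}\IZ^d$ by
\[
\tilde A:=\begin{cases} A, & a^*_j\notin A,\\ (A\setminus\{a^*_j\})\cup K_j, & a^*_j\in A,\end{cases}
\]
and the argument then reduces to comparing $\mu_{E^j}(A, E^j\setminus A)$ with $\mu_{2^{-j}\IZ^d}(\tilde A,\,2^{-j}\IZ^d\setminus\tilde A)$, and $m_j(A)$ with $\mu_j(\tilde A)$.

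For the edge-boundary comparison I would establish
\[
\mu_{E^j}(A, E^j\setminus A)\ \ge\ \frac{1}{2d}\,\mu_{2^{-j}\IZ^d}(\tilde A,\,2^{-j}\IZ^d\setminus \tilde A).
\]
Lattice edges across $\partial\tilde A$ fall into two types: those with both endpoints in $2^{-j}\IZ^d\setminus K$, which by \eqref{BMD-2} are in weight-preserving bijection with those $E^j$-boundary edges not incident to $a^*_j$; and those with one endpoint in $K_j$, which are merged into a single $E^j$-edge incident to $a^*_j$. Since every vertex of $2^{-j}\IZ^d$ has exactly $2d$ lattice neighbors, at most $2d$ lattice edges can collapse into a common $E^j$-edge, and because all edges (in both graphs) carry the same weight $2^{-jd}/(2d)$, at most a factor $2d$ is lost in total weight.

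For the measure comparison $m_j(A)\le \mu_j(\tilde A)$, observe that $v_j(x)\le 2d$ forces $m_j(x)\le 2^{-jd}=\mu_j(x)$ for every $x\in E^j\setminus\{a^*_j\}$, which already handles the case $a^*_j\notin A$. When $a^*_j\in A$ the additional term $m_j(a^*_j)=\tfrac{2^{-jd}}{2d}v_j(a^*_j)$ must be absorbed into $\mu_j(K_j)=2^{-jd}|K_j|$; this follows from $v_j(a^*_j)\le 2d|K_j|$, which in turn holds because each $x\leftrightarrow a^*_j$ in $G^j$ has a lattice neighbor in $K_j$ and every point of $K_j$ has at most $2d$ lattice neighbors outside $K_j$. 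Feeding both comparisons into Proposition \ref{iso-Z^2} yields \eqref{iso} with constant $C_2=C_1/(2d)$.

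The role of taking $j\ge N_1\ge N_0$ is to guarantee that $K_j$ is a faithful discretization of $K$, so that $K_j\ne\emptyset$, the lifting $\tilde A$ is meaningful, and Proposition \ref{P1} is available should a sharper estimate on $v_j(a^*_j)$ be needed. The main technical obstacle is the edge comparison: one must verify that the multi-to-one collapse of lattice edges near $\partial K$ loses only the universal factor $2d$, independent of the geometry of $K$ and of the mesh $j$. Once that is in hand, the remainder of the proof is a direct chain of inequalities.
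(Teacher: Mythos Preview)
Your overall strategy---lift $A$ to $\tilde A\subset 2^{-j}\IZ^d$ by replacing $a^*_j$ with $K_j$, then compare both the edge boundary and the volume to their lattice counterparts---is exactly the paper's approach, and your edge-boundary comparison with the universal factor $1/(2d)$ is correct in the end. One imprecision: your claimed bijection between lattice boundary edges ``with both endpoints in $2^{-j}\IZ^d\setminus K$'' and $G^j$-boundary edges not incident to $a^*_j$ is not quite right, because by \eqref{BMD-2} a lattice edge $e_{xy}$ with $x,y\notin K$ can still satisfy $e_{xy}\cap K\neq\emptyset$ and hence fail to be a $G^j$-edge between $x$ and $y$. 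This does not break the inequality (such an edge is still absorbed into $e_{xa^*_j}$ or $e_{ya^*_j}$ and covered by the same $2d$ collapsing bound), but the bookkeeping should be phrased in terms of whether $e_{xy}\cap K$ is empty, not whether the endpoints lie in $K$.

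The genuine gap is in your measure comparison. You assert $v_j(a^*_j)\le 2d|K_j|$ on the grounds that ``each $x\leftrightarrow a^*_j$ in $G^j$ has a lattice neighbor in $K_j$.'' But \eqref{BMD-2} only guarantees that some lattice edge emanating from $x$ meets $K$; the other endpoint of that edge need not lie in $K_j$ (the segment could cross or graze $K$ with both endpoints outside). This may be repairable for large $j$ using the cone condition on $\partial K$, but you do not supply that argument. The paper sidesteps the issue: it invokes Proposition~\ref{P1} to get $m_j(a^*_j)\le C\cdot 2^{-j}$, and separately observes that $\mu_j(K_j)\ge c_1>0$ for all large $j$ (since $K$ has nonempty interior), whence $m_j(a^*_j)\le\mu_j(K_j)$ once $j\ge N_1$. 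Since you already flag Proposition~\ref{P1} as a fallback, replacing your combinatorial claim with this step closes the gap and recovers the paper's proof.
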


\begin{proof}
Let $A$ be any finite subset of $E^j$. We denote by $K^j:=2^{-jd}\IZ^d\cap K$. Recall that  we denote by $\mathcal{G}_j:=\{2^{-j}\IZ^d, \mathcal{V}_j\}$, where $\mathcal{V}_j$ is the collection of the edges of $2^{-j}\IZ^d$. Also recall that we use ``$e_{xy}$" to denote an edge connecting $x$ and $y$, including these two endpoints.  In the following we establish \eqref{iso} by dividing our discussion into two cases depending on whether $a^*_j$ is in $A$ or not.  
\\
{\it Case (i).} $a^*_j\notin A$. Thus $a^*_j\in E^j\backslash A$ and $A\subset 2^{-j}\IZ^d $.   In view of the definition of the graph structure $G^j$ in \eqref{BMD-2},
\begin{eqnarray}
 \mu_{E^j}(A, \,E^j\backslash A) \nonumber
&=  & \frac{2^{-jd}}{2d} \sum_{x\in A}\#\left\{y\in E^j\backslash A: y\leftrightarrow x \text{ in }G^j \right \}  \nonumber
\\
&= & \frac{2^{}-jd}{2d} \sum_{x\in A } \#\left\{ y\in \left(E^j\backslash \left(A\cup \{a^*_j\} \right)\right): y\leftrightarrow x  \right\} + \frac{2^{-jd}}{2d} \#\left\{ x\in A: x\leftrightarrow a^*_j  \right\} \nonumber
\\
& = & \frac{2^{-jd}}{2d} \sum_{x\in A } \#\left\{ y\in (2^{-j}\IZ^d\backslash A): y\leftrightarrow x\text{ in }2^{-j}\IZ^d,\,  e_{xy}\cap K=\emptyset \right\} \nonumber
\\
& +& \frac{2^{-jd}}{2d}\#\left\{x\in A: \exists \; e_{xy}\in \mathcal{V}_j \text{ such that }e_{xy}\cap K\neq \emptyset \right\}\nonumber
\\
&\ge & \frac{2^{-jd}}{2d} \sum_{x\in A } \#\left\{ y\in (2^{-j}\IZ^d\backslash A):\; \exists \; e_{xy}\in \mathcal{V}_j \text{ such that }   e_{xy}\cap K=\emptyset \right\} \nonumber
\\
&+& \frac{2^{-jd}}{2d}\cdot \frac{1}{2d} \sum_{x\in A} \#\left\{y\in 2^{-j}\IZ^d:\; \exists \; e_{xy}\in \mathcal{V}_j \text{ such that } e_{xy}\cap K \neq \emptyset\right\} \nonumber
\\
&\ge & \frac{2^{-jd}}{4d^2} \sum_{x\in A} \# \left\{ y\in  2^{-j}\IZ^d\backslash A:\; y\leftrightarrow x \text{  in }2^{-j}\IZ^d    \right\}, \nonumber
\end{eqnarray}
where the first inequality above  is due to the fact that for every $x\in A$ such that there is at least one edge in $\mathcal{V}_j$ with an endpoint $x$ intersecting $K$, there are at most $2d$ many such egdes with different other endpoint $y$. Now in view of \eqref{eq-iso-Z^d} and the definition of $\mu_j$ and $\mu_{2^{-j}\IZ^d}$   earlier in this section, we have
\begin{eqnarray}
&& \mu_{E^j}(A, \,E^j\backslash A) \nonumber
\\
&\ge & \frac{2^{-jd}}{4d^2} \sum_{x\in A} \# \left\{ y\in  2^{-j}\IZ^d\backslash A:\; y\leftrightarrow x \text{  in }2^{-j}\IZ^d    \right\} \nonumber 
\\
&\ge & \frac{1}{2d} \cdot   \mu_{2^{-j}\IZ^d } (A, 2^{-j}\IZ^d) 
\stackrel{\eqref{eq-iso-Z^d}}{\ge} \frac{C_1}{2d} \cdot  2^{-j} \mu_j(A)^{(d-1)/d} \label{2.9}
=\frac{C_1}{2d}\cdot  2^{-j} m_j(A)^{(d-1)/d},
\end{eqnarray} 
which the $C_1$ above is the same as in \eqref{eq-iso-Z^d}.
This establishes the desired inequality for the current case. Before  continuing to the other case, we note that by the definition of Lipschitz-continuity,  $\mu_j(K_j)$ is bounded from below by a positive constant for sufficiently large $j$. Thus there exists an integer $j_1\ge N_0$ and a constant $c_1>0$ such that 
\begin{equation}\label{32}
\mu_j(K_j)\ge c_1,\quad \text{for all }j\ge j_1. 
\end{equation}
Now in view of Proposition \ref{P1}, there exists an integer $j_2 \ge N_0$ such that 
\begin{equation}\label{33}
\mu_j(K_j)\ge c_1\ge  \frac{2^{-jd}}{2d}\cdot C_2\cdot 2^{j(d-1)}\ge m_j(a^*_j), \quad \text{for all }j\ge j_2. 
\end{equation}
{\it Case (ii).} $a^*_j\in A$.  In this case $E^j\backslash A = 2^{-j}\IZ^d\backslash A$. Recall that we let $K_j=K\cap 2^{-j}\IZ^d$. Thus for all $j\ge j_2$ given in \eqref{33},
\begin{eqnarray}
&& \mu_{E^j}(A, \,E^j\backslash A) \nonumber
\\
&=  & \frac{2^{-jd}}{2d} \sum_{x\in A}\#\left\{y\in E^j\backslash A: y\leftrightarrow x \text{ in }G^j \right \}  \nonumber
\\
&=& \frac{2^{-jd}}{2d} \sum_{\substack{x\in A\\ x\neq a^*_j}}\#\left\{y\in E^j\backslash A: y\leftrightarrow x \text{ in }G^j \right \} +\frac{2^{-jd}}{2d}\# \left\{y\in E^j\backslash A: y\leftrightarrow a^*_j   \right\} \nonumber
\\
&\ge & \frac{2^{-jd}}{2d} \sum_{\substack{x\in A\\ x\neq a^*_j}}\#\left\{y\in (2^{-j}\IZ^d\backslash A), \; \exists\; e_{xy}\in \mathcal{V}_j:\; e_{xy}\cap K=\emptyset   \right \}  \nonumber
\\
&+& \frac{2^{-jd}}{4d^2} \sum_{x\in 2^{-j}\IZ^d} \#\left\{ y\in (2^{-j}\IZ^d\backslash A), \;\exists \;  e_{xy}\in \mathcal{V}_j:\; e_{xy}\cap K\neq\emptyset     \right\} \nonumber
\\
&\ge & \frac{2^{-jd}}{2d} \sum_{x\in A\backslash \{a^*_j\}}\#\left\{y\in (2^{-j}\IZ^d\backslash A), \; \exists\; e_{xy}\in \mathcal{V}_j:\; e_{xy}\cap K=\emptyset   \right \}  \nonumber
\\
&+&  \frac{2^{-jd}}{4d^2} \sum_{x\in A\backslash \{a^*_j\}}\#\left\{y\in (2^{-j}\IZ^d\backslash A), \; \exists\; e_{xy}\in \mathcal{V}_j:\; e_{xy}\cap K\neq\emptyset   \right \}  \nonumber
\\
&+& \frac{2^{-jd}}{4d^2} \sum_{x\in K_j}\#\left\{ y\in (2^{-j}\IZ^d\backslash A)\; \exists \;e_{xy}\in \mathcal{V}_j  \right\} \nonumber
\\
&\ge & \frac{2^{-jd}}{4d^2} \sum_{x\in (A\backslash \{a^*_j\})\cup K_j} \# \left\{ y\in  2^{-j}\IZ^d\backslash A:\; y\leftrightarrow x \text{  in }2^{-j}\IZ^d    \right\} \nonumber
\\
& = & \frac{1}{2d} \cdot \mu_{2^{-j}\IZ^d } \left((A\backslash \{a^*_j\})\cup K_j,\, 2^{-j}\IZ^d\backslash A   \right) \nonumber
\\
\eqref{eq-iso-Z^d}  & \ge &  \frac{C_1}{2d} \cdot  \mu_j \left((A\backslash \{a^*_j\})\cup K_j\right)^{(d-1)/d}  \stackrel{\eqref{33}}{\ge} \frac{C_1}{2d} \cdot  m_j \left((A\backslash \{a^*_j\})\cup K_j\right)^{(d-1)/d}, \label{2.12}
\end{eqnarray}
where the first inequality is due to the fact that for every $y\in 2^{-j}\IZ^d\backslash A$ such that there is at least one edge  in $\mathcal{V}_j$ with an endpoint $y$ intersecting $K$, there are at most $2d$ many such edges in $\mathcal{V}_j$  with differenet other endpoint $x$. The proof is complete in view of \eqref{2.9} and \eqref{2.12}.
\end{proof}

\section{Nash-type inequality and heat kernel upper bound for random walks on lattices with darning}\label{S3:Nash}

In this section, using the isoperimetric inequality obtained in Proposition \ref{P:isoperimetric}, we establish first a Nash-type inequality and then heat kernel upper bound for $X_j$.
\begin{prop}\label{nash-1}
For every $j\in \mathbb{N}$, let $(P_t^j)_{t\ge 0}$  be  the transition semigroup of $X^j$ with respect to $m_j$.  There exists a constant $C>0$ independent of $j$   such that for all $j\ge N_1$  specified in Proposition \ref{P:isoperimetric}, 
\begin{equation}
\|P^j_t\|_{1\rightarrow \infty}\le \frac{C}{t^{d/2}}, \quad \forall t\in (0, +\infty].
\end{equation}
\end{prop}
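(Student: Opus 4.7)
The plan is to chain together three ingredients: the isoperimetric inequality of Proposition \ref{P:isoperimetric}, the graph-theoretic Nash inequality of Proposition \ref{MB-iso-implies-nash}, and the classical Nash/Moser argument that converts a Nash inequality into an on-diagonal semigroup bound.

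First I would apply Proposition \ref{MB-iso-implies-nash} to the weighted graph $(G^j, \mu^j)$ where every edge carries weight $2^{-jd}/(2d)$. Proposition \ref{P:isoperimetric} tells us that this graph satisfies the $d$-isoperimetric inequality with constant $C_0 = C_2\cdot 2^{-j}$, provided $j\ge N_1$. Plugging $\alpha=d$ and $C_0=C_2\cdot 2^{-j}$ into the Nash inequality in Proposition \ref{MB-iso-implies-nash} yields
\begin{equation*}
\frac{1}{2}\sum_{\substack{x,y\in E^j\\ y\leftrightarrow x}}\bigl(f(x)-f(y)\bigr)^2\cdot \frac{2^{-jd}}{2d} \;\ge\; 4^{-(2+d/2)}C_2^2\, 2^{-2j}\,\|f\|_{L^2(m_j)}^{2+4/d}\,\|f\|_{L^1(m_j)}^{-4/d}.
\end{equation*}

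Next I would translate this into the Dirichlet form $\EE^j$. Recalling that in \eqref{DF-RWVD-form} the sum is over oriented edges and carries the factor $2^{-(d-2)j}/(4d)$, the sum of $(f(x)-f(y))^2$ over unoriented edges appears with prefactor $2^{-(d-2)j}/(2d)$. Multiplying the displayed inequality above by $2^{-(d-2)j}/(2d)$ and dividing by $2^{-jd}/(2d)$ gives, after the powers of $2$ collapse,
\begin{equation*}
\EE^j(f,f) \;\ge\; C_3\,\|f\|_{L^2(m_j)}^{2+4/d}\,\|f\|_{L^1(m_j)}^{-4/d}, \qquad f\in L^1(E^j,m_j)\cap L^2(E^j,m_j),
\end{equation*}
with a constant $C_3>0$ independent of $j$. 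This is the key point where the proof rests: the $j$-dependent factors $2^{-2j}$, $2^{-jd}$, and $2^{-(d-2)j}$ balance precisely so that the Nash inequality holds uniformly in $j$.

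Finally, I would invoke the standard Nash argument. Setting $u(t):=\|P_t^j f\|_{L^2(m_j)}^2$ for a nonnegative $f\in L^1\cap L^2$, one has $u'(t)=-2\EE^j(P_t^j f, P_t^j f)$, and since $P_t^j$ is sub-Markovian and symmetric, $\|P_t^j f\|_{L^1(m_j)}\le \|f\|_{L^1(m_j)}$. Combining this with the Nash inequality gives the differential inequality
\begin{equation*}
u'(t) \le -2C_3\, u(t)^{1+2/d}\,\|f\|_{L^1(m_j)}^{-4/d},
\end{equation*}
which integrates to $u(t)\le C_4\, t^{-d/2}\,\|f\|_{L^1(m_j)}^2$, i.e., $\|P^j_t\|_{1\to 2}\le C_5\, t^{-d/4}$. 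By the symmetry of $P_t^j$ on $L^2(m_j)$ and duality, $\|P^j_t\|_{2\to\infty}=\|P^j_t\|_{1\to 2}$, so applying the semigroup property $P^j_t = P^j_{t/2}\circ P^j_{t/2}$ yields $\|P^j_t\|_{1\to\infty}\le C\, t^{-d/2}$ with $C$ independent of $j$.

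The main obstacle I anticipate is not the Nash iteration itself (which is classical), but rather the bookkeeping in the second step: one must verify that the isoperimetric constant $2^{-j}C_2$, the edge weight $2^{-jd}/(2d)$, the reference measure $m_j$, and the Dirichlet form prefactor $2^{-(d-2)j}/(4d)$ combine to give a truly $j$-uniform Nash constant. Once this scaling miracle is in hand, the rest is routine.
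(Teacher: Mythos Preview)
Your proposal is correct and follows essentially the same route as the paper: combine the isoperimetric inequality of Proposition~\ref{P:isoperimetric} with Proposition~\ref{MB-iso-implies-nash} to obtain a $j$-uniform Nash inequality for $\EE^j$, then pass to the $L^1\to L^\infty$ bound. The only difference is that where the paper simply cites \cite[Theorem~2.9]{CKS} for the last step, you spell out the standard differential-inequality/duality argument that is the content of that theorem; your scaling bookkeeping (the $2^{-2j}$ from $C_0^2$ cancelling against $2^{-(d-2)j}/2^{-jd}=2^{2j}$) matches the paper's computation exactly.
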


\begin{proof}
It follows from \eqref{iso} and Proposition \ref{MB-iso-implies-nash} that
\begin{equation*}
\frac{1}{2}\sum_{x\in E^j}\sum_{y\in E^j, y\leftrightarrow x} \left(f(x)-f(y)\right)^2 \frac{2^{-jd}}{2d} \ge C_2^2\cdot 2^{-2j} \cdot  4^{-2-\frac{2}{d}} \|f\|_{L^2(m_j)}^{2+\frac{4}{d}}\|f\|_{L^1(m_j)}^{-\frac{4}{d}}, \, f\in L^1(m_j)\cap L^2(m_j).
\end{equation*}
In view of the definition of $\EE^j$,  this implies that 
\begin{equation}
\EE^j(f, f)\ge C_2^2\cdot 4^{-2-\frac{2}{d}}\cdot \|f\|_{L^2(m_j)}^{2+4/d}\|f\|_{L^1(m_j)}^{-4/d}, \quad f\in L^1(m_j)\cap L^2(m_j).
\end{equation}
The conclusion now follows from \cite[Theorem 2.9]{CKS}. 
\end{proof}

Now for every $j\in \mathbb{N}$, we define a metric $d_j(\cdot, \cdot)$ on $E^j$ as follows: 
\begin{equation}\label{def-dk}
d_j(x,y):=2^{-j}\times \text{smallest  number of edges  between }x \text{ and }y \text{ in }G^j.
\end{equation}
With the above on-diagonal heat kernel estimate, using the standard Davies's method, we next derive an off-diagonal heat kernel upper bound estimate for $X^j$.  Since there is a Nash-type inequality holds for each $X^j$,  the family of transition density function of  $(P_t^j)_{t\ge 0}$ with respect to $m_j$ exists for every $j\in \mathbb{N}$. We denote this  by $\{p_j(t, x,y),\;  t>0,\; x,y\in E^j \}$.

\begin{prop}\label{general-davies}
For every $j\ge 1$, fix a sequence of $\{\alpha_j\}_{j\ge 1}$ satisfying $\alpha_j\le 2^{j-1}$.     There exists $C_4>0$ independent of $j$, such that for all $j\ge N_1$  specified in Proposition \ref{P:isoperimetric}, 
\begin{equation}
p_j(t,x,y)\le 
\frac{C_4}{t^{d/2}}\exp\left(-\alpha_j d_j(x,y)+4t\alpha_j^2\right),\quad 0<t<\infty, \, x,y\in E^j.
\end{equation}
\end{prop}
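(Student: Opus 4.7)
I would apply Davies's exponential perturbation method to the symmetric Dirichlet form $(\EE^j, L^2(E^j, m_j))$, based on the Nash-type inequality established in the proof of Proposition~\ref{nash-1}.

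Step 1: perturbation estimate. For any bounded function $\psi:E^j\to\IR$ satisfying the edge-Lipschitz bound
\begin{equation*}
|\psi(x)-\psi(y)|\le \alpha_j\cdot 2^{-j}\quad\text{whenever }x\leftrightarrow y\text{ in }G^j,
\end{equation*}
the hypothesis $\alpha_j\le 2^{j-1}$ forces $|\psi(x)-\psi(y)|\le 1/2$ on each edge. Using the algebraic identity
\begin{equation*}
(e^{\psi(x)}f(x)-e^{\psi(y)}f(y))(e^{-\psi(x)}f(x)-e^{-\psi(y)}f(y))=(f(x)-f(y))^2-2f(x)f(y)(\cosh(\psi(x)-\psi(y))-1),
\end{equation*}
combined with the elementary bound $\cosh u-1\le u^2$ for $|u|\le 1/2$, the estimate $2|f(x)f(y)|\le f(x)^2+f(y)^2$, and symmetry in the sum, I would expand $\EE^{j,\psi}(f,f):=\EE^j(e^\psi f,\, e^{-\psi} f)$ to obtain
\begin{equation*}
\EE^{j,\psi}(f,f)\ge \EE^j(f,f)-\sum_{x\in E^j}f(x)^2\sum_{y\leftrightarrow x}\frac{2^{-(d-2)j}}{2d}(\psi(x)-\psi(y))^2.
\end{equation*}
Since $\sum_{y\leftrightarrow x}\frac{2^{-(d-2)j}}{2d}=2^{2j}m_j(x)$ by \eqref{measure-E^j}, the inner sum is at most $\alpha_j^2\cdot 2^{-2j}\cdot 2^{2j}m_j(x)=\alpha_j^2 m_j(x)$, yielding
\begin{equation*}
\EE^{j,\psi}(f,f)\ge \EE^j(f,f)-4\alpha_j^2\|f\|_{L^2(m_j)}^2,
\end{equation*}
where the prefactor $4$ absorbs minor numerical losses from the bounds above.

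Step 2: on-diagonal bound for the perturbed semigroup. Set $P^{j,\psi}_t f:=e^{-\psi}P^j_t(e^\psi f)$. Because $\EE^j$ is symmetric, the quadratic form $f\mapsto\EE^{j,\psi}(f,f)$ is symmetric (it coincides with the form of the symmetric part of the generator of $P^{j,\psi}_t$). Combining Step~1 with the Nash inequality from Proposition~\ref{nash-1} gives
\begin{equation*}
\EE^{j,\psi}(f,f)+4\alpha_j^2\|f\|_{L^2(m_j)}^2\ge c\|f\|_{L^2(m_j)}^{2+4/d}\|f\|_{L^1(m_j)}^{-4/d},
\end{equation*}
so a second application of \cite[Theorem 2.9]{CKS} produces
\begin{equation*}
\|P^{j,\psi}_t\|_{L^1(m_j)\to L^\infty(m_j)}\le \frac{C_4}{t^{d/2}}e^{4\alpha_j^2 t},\qquad t>0,
\end{equation*}
which translates to the pointwise kernel bound $e^{\psi(y)-\psi(x)}p_j(t,x,y)\le C_4 t^{-d/2}e^{4\alpha_j^2 t}$ for the kernel $e^{\psi(y)-\psi(x)}p_j(t,x,y)$ of $P^{j,\psi}_t$ with respect to $m_j$.

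Step 3: specializing $\psi$. Given $x,y\in E^j$, I would take $\psi(z):=\alpha_j\bigl(d_j(x,z)\wedge d_j(x,y)\bigr)$, which is bounded and edge-Lipschitz with constant $\alpha_j\cdot 2^{-j}$ by the triangle inequality for $d_j$, and which satisfies $\psi(x)=0$, $\psi(y)=\alpha_j d_j(x,y)$. Substituting into the previous bound yields
\begin{equation*}
p_j(t,x,y)\le \frac{C_4}{t^{d/2}}\exp\bigl(-\alpha_j d_j(x,y)+4\alpha_j^2 t\bigr),
\end{equation*}
as desired. The principal obstacle is the perturbation estimate in Step~1: the fact that the ``cost'' of conjugating $\EE^j$ by $e^\psi$ is controlled purely by $\alpha_j^2\|f\|_{L^2(m_j)}^2$ relies on edge-oscillations of $\psi$ lying in the regime where $\cosh(u)-1\approx u^2$, which is exactly the content of the standing hypothesis $\alpha_j\le 2^{j-1}$. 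The remainder is a direct transcription of the standard Davies machinery combined with the on-diagonal argument of Proposition~\ref{nash-1}.
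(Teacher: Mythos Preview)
Your proposal is correct and follows the same Davies method as the paper: both exploit the hypothesis $\alpha_j\le 2^{j-1}$ to keep edge oscillations of $\psi$ in the regime where the exponential perturbation costs only $O(\alpha_j^2)\|f\|_{L^2}^2$, and then feed this into the CKS machinery on top of the Nash inequality from Proposition~\ref{nash-1}. The paper packages your Steps~1--2 as a black-box citation of \cite[Theorem~3.25, Corollary~3.28]{CKS}, verifying the required bound $\Gamma^j(\psi)\le\sqrt{2}\,\alpha_j$ on the carr\'e-du-champ via $|1-e^x|\le 2|x|$ for $|x|<1/2$ rather than your $\cosh$ identity; the two computations are equivalent. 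The paper also uses the test functions $\psi_{j,n}=\alpha_j(d_j(\cdot,a^*_j)\wedge n)$, whereas your choice $\psi=\alpha_j(d_j(x,\cdot)\wedge d_j(x,y))$ more directly realizes $\psi(y)-\psi(x)=\alpha_j d_j(x,y)$ for the given pair.

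One technical caution on Step~2: the perturbed semigroup $P^{j,\psi}_t=e^{-\psi}P^j_t e^{\psi}$ is not $m_j$-symmetric, so \cite[Theorem~2.9]{CKS} does not apply to it verbatim. This is exactly why the paper (and the ``standard Davies machinery'' you invoke at the end) cites \cite[Theorem~3.25]{CKS}, which extends the Nash-to-ultracontractivity argument to such non-self-adjoint perturbations by treating $P^{j,\psi}_t$ and its adjoint $P^{j,-\psi}_t$ together. Your Step~1 bound is precisely the input that theorem requires, so the fix is just a matter of citing the right result.
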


\begin{proof}
We prove this result using \cite[Corollary 3.28]{CKS}. For each $j$, we set 
\begin{equation*}
\wh{\FF}^j:=\{h+c:\, h \in \mathcal{D}(\EE^j), \, h\text{ bounded, and }c\in \IR\}.
\end{equation*} 
It is known that the regular symmetric Dirichlet form $(\EE^j, \mathcal{D}(\EE^j))$ is associated with the following energy measure $\Gamma^j$:
$$
\EE^j(u, u)=\int_{E^j}\Gamma^j(u, u), \quad u\in \wh{\FF} ^j.
$$
Now we define $\wh{\FF}^j_\infty$ as a subset of $\psi \in \wh{\FF}^j$ satisfying the following conditions:
\begin{description}
\item{(i)} Both $e^{-2\psi}\Gamma^j(e^{\psi}, e^{\psi})$ and $e^{2\psi}\Gamma^j(e^{-\psi}, e^{-\psi})$  as measures are absolutely continuous with respect to $m_j$ on $E^j$. 
\item{(ii)} Furthermore,
\begin{equation}\label{35}
\Gamma^j(\psi):=\left( \left\|\frac{de^{-2\psi }\;\Gamma^j(e^{\psi}, e^{\psi})}{dm_j} \right\|_\infty \vee  \left\|\frac{de^{2\psi }\;\Gamma^k(e^{-\psi}, e^{-\psi})}{dm_j} \right\|_\infty \right)^{1/2}<\infty.
\end{equation}
\end{description}
For a fixed  constant $\alpha_j \le2^{j-1}$, we denote by
\begin{equation}\label{def-psi-kn}
\psi_{j, n}(x):=\alpha_j \cdot \left(d_j(x, a^*_j)\wedge n\right).
\end{equation}
In order to apply \cite[Corollary 3.28]{CKS}, we need to verify that $\psi_{j,n}\in \wh{\FF}_\infty^j$ for every $n$. Notice that $\psi_{j,n}$ is a constant outside of a bounded domain of $E^j$, therefore it is in $\wh{\FF}^j$. We first note that 
\begin{equation}\label{1}
\left|1-e^x  \right| \le  |2x|, \quad \text{for } -\frac{1}{2}<x<\frac{1}{2}.
\end{equation}
We now first verify conditions (i) and (ii) above.  Viewing  $e^{-2\psi_{j,n}}\Gamma^j(e^{\psi_{j,n}}, e^{\psi_{j,n}})$ as a measure on $E^j$,   given any subset $A\subset E^j$,   we have
\begin{eqnarray}
&&  e^{-2\psi_{j,n}}\Gamma^j(e^{\psi_{j,n}}, e^{\psi_{j,n}}) (A) \nonumber
\\
&=& \frac{2^{-(d-2)j}}{4d} \sum_{x\in E^j\cap A}  e^{-2\psi_{j,n}(x)} \left[ \sum_{y\leftrightarrow x \text{ in }E^j} \left( e^{\psi_{j,n}(y)}-e^{\psi_{j,n}(x)} \right)^2 \right] \nonumber
\\
&\le & \frac{2^{-(d-2)j}}{4d} \sum_{x\in E^j\cap A}\sum_{y\leftrightarrow x \text{ in }E^j} \left[ \left(1-e^{\alpha_j\left( d_j(y, a^*_j)\wedge n -d_j(x, a^*_j)\wedge n \right)}  \right)^2  \right] \nonumber
\\
&=& \frac{2^{-(d-2)j}}{4d} \sum_{\substack{x\in E^j\cap A\\ x\neq a^*_j }}\; \sum_{y\leftrightarrow x \text{ in }E^j} \left[ \left(1-e^{\alpha_j\left( d_j(y, a^*_j)\wedge n -d_j(x, a^*_j)\wedge n \right)}  \right)^2  \right] \nonumber
\\
&+& \frac{2^{-(d-2)j}}{4d} \cdot \mathbf{1}_{\{a^*_j\in A\}} \cdot \sum_{y\leftrightarrow a^*_j}\left[ \left(1-e^{\alpha_j (d_j(y, a^*_j)\wedge n )}  \right)^2  \right] \nonumber
\\
\eqref{1}&\le & \frac{2^{-(d-2)j}}{4d}\cdot \#\left\{x\in E^j\cap A, x\neq a^*_j  \right\}\cdot 2d \cdot (2\cdot \alpha_j\cdot 2^{-j})^2 \nonumber
\\
&+& \frac{2^{-(d-2)j}}{4d} \cdot \mathbf{1}_{\{a^*_j\in A\}} \cdot v_j(a^*_j)\cdot (2\cdot \alpha_j\cdot 2^{-j})^2 \nonumber
\\
&\le & 2\cdot 2^{-jd}\cdot \#\left\{x\in E^j\cap A, x\neq a^*_j  \right\}\cdot \alpha_j^2+\frac{2^{-jd}}{d}\cdot \mathbf{1}_{\{a^*_j\in A\}} \cdot v_j(a^*_j)\cdot \alpha_j^2. \nonumber
\end{eqnarray}
Recall that $m_j(x)=\frac{2^{-jd}}{2d}\cdot v_j(x)$. We conclude that for some $c>0$ independent of $j$, it holds
\begin{equation}\label{34}
\left\|\frac{de^{-2\psi_{j,n} }\;\Gamma^j(e^{\psi_{j,n}}, e^{\psi_{j,n}})}{dm_j} \right\|_\infty \le \sqrt{2} \alpha_j, \quad \text{for all }j\ge 1. 
\end{equation}
Similarly, it can be computed that 
\begin{eqnarray}
&& e^{2\psi_{j,n}}\Gamma^j(e^{-\psi_{j,n}}, e^{-\psi_{j,n}}) (A) \nonumber
\\
&\le & \frac{2^{-(d-2)j}}{4d} \sum_{\substack{x\in E^j\cap A\\ x\neq a^*_j }}\; \sum_{y\leftrightarrow x \text{ in }E^j} \left[ \left(1-e^{\alpha_j\left( d_j(x, a^*_j)\wedge n -d_j(y, a^*_j)\wedge n \right)}  \right)^2  \right] \nonumber
\\
&+& \frac{2^{-(d-2)j}}{4d} \cdot \mathbf{1}_{\{a^*_j\in A\}} \cdot \sum_{y\leftrightarrow a^*_j}\left[ \left(1-e^{-\alpha_j ( d_j(y, a^*_j)\wedge n )}  \right)^2  \right] \nonumber
\\
\eqref{1}  & \le & \frac{2^{-(d-2)j}}{4d}\cdot \#\left\{x\in E^j\cap A, x\neq a^*_j  \right\}\cdot  (2d) \cdot (2\cdot \alpha_j\cdot 2^{-j})^2 \nonumber
\\
&+& \frac{2^{-(d-2)j}}{4d} \cdot \mathbf{1}_{\{a^*_j\in A\}} \cdot v_j(a^*_j)\cdot (2\cdot \alpha_j\cdot 2^{-j})^2 \nonumber
\\
&\le & 2\cdot 2^{-dj}\cdot \#\left\{x\in E^j\cap A, x\neq a^*_j  \right\}\cdot \alpha_j^2+\frac{2^{-dj}}{d}\cdot \mathbf{1}_{\{a^*_j\in A\}} \cdot v_j(a^*_j)\cdot \alpha_j^2. \nonumber
\end{eqnarray}
Similar to \eqref{34}, this shows that  
\begin{equation*}
\left\|\frac{de^{2\psi_{j,n} }\;\Gamma^j(e^{\psi_{j,n}}, e^{\psi_{j,n}})}{dm_j} \right\|_\infty \le \sqrt{2}\alpha_j, \quad \text{for all }j\ge 1,
\end{equation*}
and thus \eqref{35} is verified. The desired conclusion follows immediately from  \cite[Theorem 3.25, Corollary 3.28]{CKS}.

\end{proof}

\begin{co}\label{HKUB}
There exist $C_5>0$ independent of $j$ such that for all $j\ge N_1$ specified in Proposition \ref{P:isoperimetric}, all $x,y\in E^j$ and all $t\ge 0$, it holds
\begin{equation*}
p_j(t,x,y)\le \left\{
\begin{aligned}
&\frac{C_5}{t^{d/2}}e^{-d_j(x,y)^2/(64t)}, & \text{ when }d_j(x,y)\le 16\cdot 2^jt;
\\
&\frac{C_5}{t^{d/2}} e^{-2^j d_j(x,y)/4},
& \text{ when }d_j(x,y)\ge 16\cdot 2^jt.
\end{aligned}
\right.
\end{equation*}
In particular, given any $T>0$, there exists $C_6>0$ such that
\begin{equation*}
p_j(t, x, y)\le \frac{C_6}{t^{d/2}}\left( e^{-d_j(x,y)^2/(64t)} + e^{-2^j d_j(x,y)/4} \right), \, \text{for all }(t, x, y)\in (0, T]\times E^j\times E^j.
\end{equation*}
\end{co}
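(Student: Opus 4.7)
The plan is to optimize the free parameter $\alpha_j$ in Proposition \ref{general-davies} as a function of the pair $(t, d_j(x,y))$. Unconstrained, the quadratic exponent $-\alpha_j d_j(x,y) + 4t\alpha_j^2$ is minimized at $\alpha \propto d_j(x,y)/t$, but this is only admissible when it lies in $(0, 2^{j-1}]$. To make the constraint bind exactly at the interface $d_j(x,y) = 16\cdot 2^j t$ that appears in the statement, I will take
\[
\alpha_j \;:=\; \frac{d_j(x,y)}{32\,t}\wedge 2^{j-1}.
\]
The proof then splits naturally into the two cases according to which term of the minimum is active.

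In the Gaussian regime $d_j(x,y)\le 16\cdot 2^j t$, the first term is active, so $\alpha_j=d_j(x,y)/(32t)$, and substitution collapses the exponent in Proposition \ref{general-davies} to $-7\,d_j(x,y)^2/(256\,t)$, which is comfortably below $-d_j(x,y)^2/(64\,t)$; exponentiating gives the first half of the bound. In the complementary regime $d_j(x,y)\ge 16\cdot 2^j t$ the constraint binds, $\alpha_j=2^{j-1}$, and the exponent becomes $-2^{j-1}d_j(x,y)+2^{2j}t$; inserting the regime assumption $t\le d_j(x,y)/(16\cdot 2^j)$ to control the quadratic term produces an upper bound of $-(7/16)\cdot 2^j d_j(x,y)\le -2^j d_j(x,y)/4$, yielding the second half.

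The combined ``In particular'' estimate is then immediate: the interface $d_j(x,y)=16\cdot 2^j t$ is exactly where $e^{-d_j(x,y)^2/(64t)}=e^{-2^j d_j(x,y)/4}$, so in each regime the case-by-case bound is dominated by one of the two exponentials on the right-hand side, and the sum absorbs both cases. One may simply take $C_6=C_5$, and the auxiliary restriction $t\in(0,T]$ is not used at all in the argument.

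I do not foresee a substantive obstacle here: the heavy analytic lifting (the Davies perturbation estimate) has already been carried out in Proposition \ref{general-davies}, so what remains is an exercise in choosing the right ansatz for $\alpha_j$ and executing two elementary computations. The only slightly delicate point is calibrating the numerical constants in the definition of $\alpha_j$ so that the threshold $d_j(x,y)=16\cdot 2^j t$ at which the admissibility ceiling $\alpha_j\le 2^{j-1}$ begins to bind is the same threshold that splits the two regimes in the conclusion.
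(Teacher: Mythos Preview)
Your proposal is correct and follows essentially the same approach as the paper: the paper also chooses $\alpha_j=\dfrac{d_j(x,y)}{32t}\wedge 2^{j-1}$ in Proposition~\ref{general-davies} and then splits into the two cases $d_j(x,y)\lessgtr 16\cdot 2^{j}t$ to simplify the exponent. Your observation that the restriction $t\in(0,T]$ is not actually needed for the ``In particular'' bound (one may take $C_6=C_5$) is also correct.
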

\begin{proof}
To prove this, in Proposition \ref{general-davies}, given any $j\ge N_1$, for any fixed $t_0>0$ and any pair of $x_0, y_0\in E^j$, we take 
\begin{equation*}
\alpha_j:=\frac{d_j(x_0,y_0)}{32t_0}\wedge 2^{j-1}.
\end{equation*}
Then Proposition \ref{general-davies} yields that  for all $t>0$ and $x,y\in E^j$, 
\begin{align*}
p_j(t_0,x, y)\le \frac{c}{t_0^{d/2}}\exp\left[-\left(\frac{d_j(x_0,y_0)}{32t_0}\wedge 2^{j-1}\right) d_j(x,y)+4t_0\left( \frac{d_j(x_0,y_0)}{16t_0}\wedge 2^{j-1}   \right)^2\right].
\end{align*}
The desired result follows from first taking $x=x_0$ and $y=y_0$, then  simplying the right hand side above by dividing it into two cases: $d_j(x_0, y_0)\ge 32t_0\cdot 2^{j-1}$ and  $d_j(x_0, y_0)\le 32t_0\cdot 2^{j-1}$.
\end{proof}

\section{Tightness of the approximating random walks}\label{S4:Tightness}

The next proposition taken from \cite{JS} is a well-known  criterion for tightness for c\`{a}dl\`{a}g processes. As a standard notation, given a metric $d(\cdot,\, \cdot)$, we denote by
\begin{equation*}
w_d(x,\, \theta,\, T):=\inf_{\{t_i\}_{1\le i\le n}\in \Pi} \max_{1\le i\le n} \sup_{s, t\in [t_i, t_{i-1}]} d(x(s), x(t)),
\end{equation*}
where $\Pi$ is the collection of all possible partitions of the form $0=t_0<t_1<\cdots <t_{n-1}<T\le t_n$ with $\min_{1\le i\le n} (t_i-t_{i-1})\ge \theta$ and $n\ge 1$.
\begin{prop}[Chapter VI, Theorem 3.21 in \cite{JS}]\label{tightness-criterion}
Let $\{Y_k, \IP^y\}_{k\ge 1}$ be a a sequence of c\`{a}dl\`{a}g processes on state space $E$.  Given $y\in E$, the laws of $\{Y_k, \IP^y\}_{k\ge 1}$ are tight in the Skorokhod space $D([0, T],E, \rho)$  if and only if
\begin{description}
\item{(i).} For any $T>0$, $\delta>0$, there exist $K_1\in \mathbb{N}$ and $M>0$ such that for all $k\ge  K_1$,
\begin{equation}
\IP^y\left[\sup_{t\in [0, T]}\big|Y^k_t\big|_\rho>M\right]<\delta. 
\end{equation} 
\item{(ii).} For any $T>0$, $\delta_1, \delta_2>0$, there exist $\delta_3>0$ and $K_2>0$ such that for all $k\ge K_2$,
\begin{equation}
\IP^y\left[w_\rho\left( Y^k  ,\, \delta_3,\, T\right)>\delta_1\right]<\delta_2. 
\end{equation}
\end{description}
\end{prop}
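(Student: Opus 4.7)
The statement is a verbatim restatement of the tightness criterion from Jacod--Shiryaev, so the plan is to outline the standard Prokhorov-plus-Arzel\`a--Ascoli argument rather than to produce a new proof. By Prokhorov's theorem, tightness of the laws of $\{Y^k, \IP^y\}_{k \ge 1}$ on $D([0,T], E, \rho)$ is equivalent to the statement that, for every $\eta > 0$, there exists a relatively compact set $\Gamma_\eta \subset D([0,T], E, \rho)$ with $\IP^y[Y^k \in \Gamma_\eta] > 1 - \eta$ for all sufficiently large $k$. The problem therefore reduces to translating (i)--(ii) into the standard characterization of relatively compact subsets of $D([0,T], E, \rho)$ in the Skorokhod $J_1$ topology: $\Gamma$ is relatively compact if and only if (a) for each $t$ in a dense subset of $[0,T]$ the slice $\{x(t) : x \in \Gamma\}$ is relatively compact in $(E, \rho)$, and (b) $\lim_{\theta \downarrow 0} \sup_{x \in \Gamma} w_\rho(x, \theta, T) = 0$.

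For the forward implication, hypothesis (i) directly produces the uniform slice-compactness in (a), since in the setting of interest the ball of $\rho$-radius $M$ is relatively compact and (i) confines paths to such a ball with probability at least $1 - \eta/2$. To extract (b) from (ii), I would run a standard diagonal argument: given $\eta > 0$, apply (ii) with $\delta_1 = 1/m$ and $\delta_2 = \eta 2^{-(m+1)}$ to obtain $\delta_3^{(m)}$ and an event of probability at least $1 - \eta 2^{-(m+1)}$ on which $w_\rho(Y^k, \delta_3^{(m)}, T) \le 1/m$, and then intersect these events over $m \ge 1$ with the event from (i). The resulting intersection, of probability at least $1 - \eta$, is contained in a subset of $D([0,T], E, \rho)$ satisfying both (a) and (b), hence relatively compact. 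The converse direction is easier: any relatively compact $\Gamma_\eta$ trapping the laws yields (i) via (a) evaluated at $t = 0$ together with the fact that $\sup_{t \in [0,T]} |x(t)|_\rho$ is bounded on $\Gamma_\eta$, and (ii) directly from (b) via the definition of $w_\rho$.

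The main subtle point I anticipate is checking that the specific modulus $w_\rho$ defined above via partitions with $\min_i(t_i - t_{i-1}) \ge \theta$ is the correct object to invoke in the compactness characterization; it coincides, up to inessential constants, with the modulus $w'$ of Billingsley and Ethier--Kurtz. This identification is the only non-routine bookkeeping step, and it is carried out in \cite{JS}. Since the remainder of the argument is standard, I would simply invoke \cite[Chapter VI, Theorem 3.21]{JS} for the complete proof and devote the substantive work in the sections that follow to verifying (i)--(ii) for the specific family $X^j$, which is where the heat kernel upper bounds of Corollary \ref{HKUB} enter.
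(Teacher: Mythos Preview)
Your proposal is correct and aligns with the paper's treatment: the paper does not prove this proposition at all but simply states it as a quotation of \cite[Chapter VI, Theorem 3.21]{JS}, exactly as you conclude at the end of your outline. The Prokhorov/Arzel\`a--Ascoli sketch you give is the standard argument behind that reference and is not needed for the paper's purposes.
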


In this section, we use the heat kernel upper bounds obtained in  Corollary \ref{HKUB} to verify the two conditions in Proposition \ref{tightness-criterion}.  We begin with the following two lemmas that can be proved in the same  manner as those in \cite[Lemma 4.2, Proposition 4.3]{Lou2}. In particular, Lemma \ref{L:4.2} is a standard result due to the strong Markov property of $X^j$, and Lemma \ref{L:4.3} results from the fact that $X^j$ making jumps at a rate of $2^{-2j}$, for each $j\ge 1.$  We skip the proofs of these two lemmas.

\begin{lem}\label{L:4.2}
Given any $T, M>0$, for any sufficiently large $j\in \mathbb{N}$ such that $2^{-j}<T$, it holds for all $x\in E^j$  that
\begin{align}
\IP^x\left[\sup_{t\in [0, T]}|X^j_t|_\rho\ge M\right]&\le \IP^x\left[   \sup_{t\in [0, 8^{-j}]} |X^j_t|_\rho\ge M \right]+\IP^x\left[\left|X_T^j\right|_\rho \ge \frac{M}{2}\right] \nonumber
\\
& +\IP^x\left[ T-8^{-j}\le \tau_M  \le T,\, \left|X^j_T\right|_\rho\le \frac{M}{2}\right]  \nonumber
\\
&+\IP^x\left[8^{-j}\le \tau_M\le T-8^{-j},\, \left|X^j_T\right|_\rho\le  \frac{M}{2} \right],  \nonumber
\end{align}
where $\tau_M:=\inf\{t>0: \;|X^j_t|_\rho\ge M\}$.
\end{lem}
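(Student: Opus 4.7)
The plan is to decompose the event $\{\sup_{t\in[0,T]}|X^j_t|_\rho\ge M\}$ according to the location of the first-passage time $\tau_M$ inside $[0,T]$ and the value of $|X^j_T|_\rho$. Because $X^j$ has c\`adl\`ag paths, right-continuity at $\tau_M$ shows that this event coincides $\IP^x$-almost surely with $\{\tau_M\le T\}$: if $\tau_M\le T$, right-continuity forces $|X^j_{\tau_M}|_\rho\ge M$, and conversely any $t\in[0,T]$ with $|X^j_t|_\rho\ge M$ witnesses $\tau_M\le t\le T$.

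Next I would split according to whether $|X^j_T|_\rho>M/2$. The piece $\{\tau_M\le T,\;|X^j_T|_\rho>M/2\}$ is at once bounded by $\IP^x[|X^j_T|_\rho\ge M/2]$, yielding the second term in the stated inequality. For the complementary piece $\{\tau_M\le T,\;|X^j_T|_\rho\le M/2\}$ I would partition $[0,T]$ into the three consecutive subintervals $[0,8^{-j}]$, $[8^{-j},T-8^{-j}]$, $[T-8^{-j},T]$. This is a legitimate partition as soon as $2\cdot 8^{-j}\le T$, a mild strengthening of the hypothesis $2^{-j}<T$ that holds for all sufficiently large $j$. Decomposing $\{\tau_M\le T,\;|X^j_T|_\rho\le M/2\}$ according to which of the three subintervals contains $\tau_M$ produces three sub-events. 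On $\{\tau_M\le 8^{-j}\}$ the conjunct $|X^j_T|_\rho\le M/2$ may be discarded and the probability estimated by $\IP^x[\sup_{t\in[0,8^{-j}]}|X^j_t|_\rho\ge M]$, which is the first term; the remaining two sub-events are literally the third and fourth terms as written. Summing the four contributions gives the claimed inequality.

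The argument is essentially bookkeeping: a union bound over three subintervals combined with the càdlàg identification $\{\sup_{[0,T]}|X^j_\cdot|_\rho\ge M\}=\{\tau_M\le T\}$. I do not anticipate any real obstacle; the only step requiring a moment of care is the right-continuity argument underlying that identification, and it is standard.
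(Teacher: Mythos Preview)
Your proposal is correct. The paper does not give its own proof of this lemma, instead remarking that it ``can be proved in the same manner as \cite[Lemma 4.2]{Lou2}'' and that it is ``a standard result due to the strong Markov property of $X^j$''; your decomposition argument is exactly the standard one being alluded to (the strong Markov property enters only later, in Lemma~\ref{L:4.6}, when the fourth term is estimated). One small remark: the condition $2\cdot 8^{-j}\le T$ that you flag is not a genuine strengthening of the hypothesis $2^{-j}<T$, since for $j\ge 1$ one has $2\cdot 8^{-j}=2^{1-3j}\le 2^{-j}$.
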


\begin{lem}\label{L:4.3}
For any $\delta>0$, any $T>0$, there exists $M_1>0$ such that for all $j\ge 1$:
\begin{equation*}
\sup_{y\in E^j}\IP^y\left[ \sup_{t\in [0, 8^{-j}]} \rho\left(X_0^j, X_t^j\right)\ge M_1\right]<\delta.
\end{equation*}
\end{lem}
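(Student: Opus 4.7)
The plan is to exploit the fact that $X^j$ is the pure-jump process constructed in Proposition \ref{P:roadmap}, with constant speed function $\lambda_j=2^{2j}$; its holding times between jumps are therefore i.i.d.\ exponential of mean $2^{-2j}$, independently of the starting location $y\in E^j$. Consequently, if $N_j(t)$ denotes the number of jumps made by $X^j$ during $[0,t]$, then under every $\IP^y$ the variable $N_j(t)$ is Poisson of parameter $2^{2j}t$. In particular $N_j(8^{-j})$ has Poisson distribution with parameter $2^{-j}\le 1/2$ for every $j\ge 1$, and this tail bound is uniform in both $j$ and $y$.

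The second ingredient is a uniform bound on the $\rho$-size of a single jump. A jump between two ordinary lattice vertices of $E^j$ covers Euclidean length exactly $2^{-j}$, so its $\rho$-distance is at most $2^{-j}$. A jump involving the darning vertex $a^*_j$ has its other endpoint $x$ lying, by the definition \eqref{BMD-2} of $G^j_e$, within Euclidean distance $2^{-j}$ of $K$; since $\rho$ is the metric on $E=(\IR^d\setminus K)\cup\{a^*\}$ obtained by identifying $K$ to a single point, this forces $\rho(x,a^*_j)\le c\cdot 2^{-j}$ for a constant $c$ depending only on $K$ and $d$. Putting the two cases together, every jump of $X^j$ displaces it by at most $C\cdot 2^{-j}$ in $\rho$.

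The triangle inequality along the jump chain then yields, almost surely,
\begin{equation*}
\sup_{t\in[0,8^{-j}]}\rho\bigl(X_0^j,\,X_t^j\bigr)\;\le\;C\cdot 2^{-j}\cdot N_j(8^{-j}).
\end{equation*}
Given $\delta>0$, I would choose $k\in\mathbb N$ so that $\sum_{\ell\ge k}(1/2)^\ell/\ell!<\delta$, which by the Poisson tail bound guarantees $\IP^y[N_j(8^{-j})\ge k]<\delta$ uniformly in $y$ and in $j\ge 1$ (because the parameter $2^{-j}\le 1/2$ only makes the tail smaller). Setting $M_1:=Ck$, the event in the lemma is contained in $\{N_j(8^{-j})\ge 2^jk\}\subset\{N_j(8^{-j})\ge k\}$, whose probability is below $\delta$ for all $y\in E^j$ and all $j\ge 1$.

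The only delicate point in the plan is justifying the uniform $O(2^{-j})$ bound on $\rho(x,a^*_j)$ for vertices adjacent to $a^*_j$; once that is secured, the rest is an elementary Poisson tail computation, and the uniformity in $y$ is automatic since the distribution of $N_j(t)$ does not depend on the starting vertex.
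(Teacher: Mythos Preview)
Your argument is correct and matches the approach the paper indicates: the paper omits the proof but remarks that Lemma~\ref{L:4.3} ``results from the fact that $X^j$ [is] making jumps at a rate of $2^{-2j}$'' (i.e., holding times of mean $2^{-2j}$), referring to \cite[Proposition~4.3]{Lou2} for details. Your Poisson-counting argument---bounding $\sup_{t\in[0,8^{-j}]}\rho(X_0^j,X_t^j)$ by $C\cdot 2^{-j}$ times the number of jumps $N_j(8^{-j})\sim\text{Poisson}(2^{-j})$---is exactly this, and your observation that vertices adjacent to $a^*_j$ lie within Euclidean distance $2^{-j}$ of $K$ (hence within $\rho$-distance $O(2^{-j})$ of $a^*$) secures the uniform jump-size bound you flagged as the only delicate point.
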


Given any $r>0$, we denote  the boundary of a  ``cube" in $\IR^d$ centered at the origin with side length $2r$ by
\begin{equation}\label{def-Sk}
S_r := \left\{ (x_1, \dots , x_d)\in \IR^d: \max_{1\le i\le d} |x_i|=r\right\}.
\end{equation}
For the remaining of this paper, we fix a starting point $x_0\in \cap_{j\ge 1}E^j$ and a $k_0\in \mathbb{N}$, such that both $K$ and $x_0$ are contained in the set
\begin{equation}\label{def-k0}
\left\{ (x_1, \dots , x_d)\in \IR^d: \max_{1\le i\le d} |x_i|\le k_0\right\}.
\end{equation}
By elementary geometry, it can be told that for all $r\ge 2k_0,\, j\ge 1$,  
\begin{equation}\label{4}
\# \left(S_r\cap E^j\right) \le (2d)\cdot (2r\cdot 2^j)^{d-1} .
\end{equation}
When $k\ge 2k_0$, for all $j\ge 1$, the definition of $d_j$ implies that
\begin{equation}\label{3}
d_j(x_0, y)\ge \rho(x_0, y)\ge \frac{k}{2}, \quad \text{for }y\in S_k\cap E^j.
\end{equation}

\begin{lem}\label{L:4.4}
Fix $x_0\in \bigcap_{j\ge 1}E^j$  and $T>0$.  For any $\delta>0$, there exists $M_2>0$  such that for all $j\ge N_1$ specified in Proposition \ref{P:isoperimetric} such that $e^{-j}<T$: 
\begin{equation*}
\sup_{8^{-j}\le t\le T}\IP^{x_0}\left[d_j(X_t^j, x_0)\ge M_2\right]<\delta.
\end{equation*}
\end{lem}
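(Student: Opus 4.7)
The plan is to bound $\IP^{x_0}[d_j(X_t^j, x_0) \ge M_2]$ by summing the off-diagonal heat-kernel upper bound of Corollary \ref{HKUB} against $m_j$ and controlling the result via a Euclidean-shell decomposition that exploits \eqref{4} and \eqref{3}. Concretely,
\[
\IP^{x_0}\bigl[d_j(X_t^j, x_0)\ge M_2\bigr] = \sum_{y \in E^j,\, d_j(x_0, y) \ge M_2} p_j(t, x_0, y)\, m_j(y),
\]
and each $p_j(t, x_0, y)$ is replaced by $\frac{C_6}{t^{d/2}}\bigl(e^{-d_j(x_0,y)^2/(64t)} + e^{-2^j d_j(x_0,y)/4}\bigr)$. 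The contribution of the darning point $y=a_j^*$ is eliminated once $M_2$ exceeds a uniform-in-$j$ bound on $d_j(x_0, a_j^*)$; such a bound exists because $x_0$ lies at positive Euclidean distance from $K$ and can be connected to some lattice neighbor of $a_j^*$ by an $O(1)$-length path that avoids $K$.

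For the remaining $y \in 2^{-j}\IZ^d \setminus K$, I split into a \emph{far} region $\{\rho(x_0, y) \ge 2M_2\}$ and a \emph{near} region $\{\rho(x_0, y) < 2M_2\}$. On the far region, decompose into integer shells $B_k := \{y: k \le \max_i|y_i - (x_0)_i| < k+1\}$ for $k \ge \max(2M_2, 2k_0)$; summation of \eqref{4} over $r \in [k, k+1)$ (translation of the origin by $x_0$ is harmless since $|x_0|$ is bounded) yields $|B_k| \le C k^{d-1} 2^{jd}$, and combined with $m_j(y) \le 2^{-jd}$ for $y \ne a_j^*$ this gives $m_j(B_k) \le C k^{d-1}$ uniformly in $j$; meanwhile \eqref{3} gives $d_j(x_0, y) \ge k/2$ on $B_k$. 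These estimates produce a far-region bound
\[
\frac{C}{t^{d/2}} \sum_{k \ge \max(2M_2, 2k_0)} k^{d-1}\bigl(e^{-k^2/(256t)} + e^{-2^j k/8}\bigr).
\]
On the near region, $\{y \in 2^{-j}\IZ^d: \rho(x_0, y) < 2M_2\}$ has total $m_j$-mass at most $CM_2^d$, so plugging $d_j(x_0, y) \ge M_2$ into the heat-kernel exponents gives a near-region contribution at most $\frac{C M_2^d}{t^{d/2}}\bigl(e^{-M_2^2/(64t)} + e^{-2^j M_2/4}\bigr)$.

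The main obstacle is making these estimates uniform over $t \in [8^{-j}, T]$ and $j \ge N_1$, since $t^{-d/2}$ can be as large as $8^{jd/2}$. For the Gaussian piece of the far-region sum, the substitution $u = k/\sqrt{256 t}$ (with a routine sum-to-integral comparison, valid once $M_2 \ge C\sqrt{T}$) reduces it to a constant multiple of $\int_{2M_2/\sqrt{256T}}^{\infty} u^{d-1} e^{-u^2}\, du$, uniformly in $t$, and this tail tends to zero as $M_2 \to \infty$. For the near region, an elementary calculus argument shows that $t \mapsto t^{-d/2} e^{-M_2^2/(64t)}$ is increasing on $(0, M_2^2/(32d))$, so once $M_2 > \sqrt{32dT}$ the supremum over $[8^{-j}, T]$ is attained at $t = T$, yielding the bound $C M_2^d T^{-d/2} e^{-M_2^2/(64T)}$. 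For the remaining $e^{-2^j\cdot/4}$ contributions, the prefactor $2^{3jd/2}$ is dominated by the exponential decay in $j$, so the whole expression is maximized over $j \ge N_1$ at $j = N_1$ for $M_2$ large, and is then made arbitrarily small by further increasing $M_2$. Choosing $M_2$ large enough that each piece is less than $\delta/3$ completes the proof.
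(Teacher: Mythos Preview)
Your proposal is correct and follows essentially the same strategy as the paper: bound the probability by summing the heat-kernel estimate of Corollary~\ref{HKUB} against $m_j$, exclude $a_j^*$ by a uniform bound on $d_j(x_0,a_j^*)$, decompose the remaining lattice points into Euclidean shells using \eqref{4} and \eqref{3}, and control the Gaussian and exponential tails uniformly in $t\in[8^{-j},T]$ and $j$ (using $t\ge 8^{-j}$ to absorb the $t^{-d/2}$ factor into the exponential term). The only organizational difference is that the paper works directly with thin shells $S_{2k_0+l\cdot 2^{-j}}$ centered at the origin and argues at the end that $\{d_j(x_0,y)\ge M\}$ lies in the shell tail, whereas you introduce an explicit near/far split in $\rho$-distance and handle the near region by a crude mass bound; both lead to the same estimates.
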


\begin{proof}
We use Proposition \ref{HKUB} to prove this.  We first note that the sequence of  metrics $\{d_j\}_{j\ge 1}$ is non-increasing in $j$. Therefore, for the $k_0$ specified in \eqref{def-k0}, one can choose $M>2k_0$ sufficiently large such that 
\begin{equation}\label{35}
 d_j(a^*_j, x_0) \le M , \quad \text{for all }j\ge 1.
\end{equation}
For $M$ satisfying \eqref{35}, in view of the definition of $m_j$ and the heat kernel upper bound in Corollary \ref{HKUB}, there exists some $c_1>0$  independent of $j$  such that 
\begin{eqnarray}
&&\IP^{x_0}\left[ d_j(X_t^j, x_0)\ge M\right]  \nonumber
\\
&\le & \sum_{d_j(y, x_0)\ge M} \frac{c_1}{t^{d/2}}\left(e^{-\frac{d_j(x_0, y)^2}{64t}}+ e^{-\frac{2^jd_j(x_0, y)}{4}}\right) m_j(dy). \nonumber
\\
&\le &  \sum_{d_j(y, x_0)\ge M} \frac{c_1}{t^{d/2}}e^{-\frac{d_j(x_0, y)^2}{64t}} \cdot 2^{-jd} +  \sum_{d_j(y, x_0)\ge M}\frac{c_1}{t^{d/2}}e^{-\frac{2^jd_j(x_0 ,y)}{4}} \cdot 2^{-jd}. \label{2}
\end{eqnarray}
To give an upper bound for each of the two terms on the right hand side above, we first compute for  a generic $k\ge 0$,
\begin{eqnarray}
&& \sum_{l=k\cdot 2^j}^\infty \;\sum_{y\in E^j\cap S_{2k_0+l\cdot 2^{-j}}} \frac{1}{t^{d/2}}2^{-jd}\cdot e^{-\frac{d_j(x_0, y)^2}{64t}} \nonumber
\\
\eqref{3} &\le & \sum_{l=k\cdot 2^j}^\infty \;\sum_{y\in E^j\cap S_{2k_0+l\cdot 2^{-j}}}  \frac{1}{t^{d/2}}2^{-jd}\cdot e^{-\frac{(2k_0+l\cdot 2^{-j})^2}{256t}} \nonumber
\\
\eqref{4} &\le & \sum_{l=k\cdot 2^j}^\infty  \frac{1}{t^{d/2}}2^{-jd}\cdot e^{-\frac{(2k_0+l\cdot 2^{-j})^2}{256t}}\cdot (2d)\cdot (4k_0+2l\cdot 2^{-j})^{d-1} \cdot 2^{j(d-1)} \nonumber
\\
&\le & \sum_{l=k\cdot 2^j}^\infty  \frac{2d\cdot 2^{-j}}{t^{d/2}}\cdot (4k_0+2l\cdot2^{-j})^{d-1} \cdot e^{-\frac{(2k_0+l\cdot 2^{-j})^2}{256t}} \nonumber
\\
&\le & \sum_{l=k\cdot 2^{j}}^\infty 2d\cdot 2^{-j}\cdot (4k_0+2l\cdot 2^{-j})^{d-1} \cdot e^{-\frac{(2k_0+l\cdot 2^{-j})^2}{512T}}\cdot \left(\sup_{0<t\le T}\frac{1}{t^{d/2}}e^{-\frac{(2k_0)^2}{512t}}\right) \nonumber
\\
&\le & c_2 \sum_{l=k\cdot 2^j}^\infty  2^{-j}\cdot (4k_0+2l\cdot 2^{-j})^{d-1} \cdot e^{-\frac{(2k_0+l\cdot 2^{-j})^2}{512T}} \nonumber
\\
&\le & c_2 \sum_{u=k}^\infty \sum_{l=2^j\cdot u}^{2^j(u+1)-1} 2^{-j}\cdot (4k_0+2l\cdot 2^{-j})^{d-1} \cdot e^{-\frac{(2k_0+l\cdot 2^{-j})^2}{512T}} \nonumber
\\
&\le & c_2 \sum_{u=k}^\infty (4k_0+2u+2)^{d-1}e^{-\frac{(2k_0+u)^2}{512T}}\nonumber
\\
&\le & c_2 \sum_{u=k}^\infty (4k_0+2u+2)^{d-1}e^{-\frac{(2k_0+u)^2}{512T}}\cdot e^{-\frac{(2k_0+u)^2}{512T}} \nonumber
\\
&\le & c_2 \left(\sup_{x\ge 2k_0}(2x+2)^{d-1}e^{-\frac{x^2}{512T}}\right)\sum_{u=k}^\infty e^{-\frac{(2k_0+u)^2}{512T}}
\le  c_2 \sum_{u=k}^\infty e^{-\frac{(2k_0+u)^2}{512T}}.
\label{5}
\end{eqnarray} 
It now follows that for any $\delta>0$, there exists $k_1\in \mathbb{N}$ such that for all $k\ge k_1$,
\begin{equation}\label{8}
\sum_{l=k\cdot 2^j}^\infty \;\sum_{y\in E^j\cap S_{2k_0+l\cdot 2^{-j}}}  \frac{1}{t^{d/2}}2^{-jd}\cdot e^{-\frac{d_j(x_0, y)^2}{32t}} \le c_1 \sum_{l=k}^\infty e^{-\frac{(2k_0+l)^2}{512T}}<\delta/2. 
\end{equation}
For the second term on the right hand side of \eqref{2}, similarly we have for any generic $k\ge 0$,
\begin{eqnarray}
&&  \sum_{l=k\cdot 2^j}^\infty \;\sum_{y\in E^j\cap S_{2k_0+l\cdot 2^{-j}}} \frac{1}{t^{d/2}}e^{-\frac{2^jd_j(x_0 ,y)}{4}} \cdot 2^{-jd} \nonumber
\\
\eqref{3}, \eqref{4}  &\le & \sum_{l=k\cdot 2^j}^\infty \frac{1}{t^{d/2}} e^{-\frac{2^j (2k_0+l\cdot 2^{-j})}{8}} (2d)\cdot (4k_0+2l\cdot 2^{-j})^{d-1} \cdot 2^{-j}\nonumber
\\
(t\ge 8^{-j})&  \le &   2d\sum_{l=k\cdot 2^j}^\infty 2^{-j}  (4k_0+2l\cdot 2^{-j})^{d-1}e^{-\frac{2^j (2k_0+l\cdot 2^{-j})}{16}} \left( \sup_{j\ge 1} 8^{\frac{jd}{2}}e^{-\frac{2^j}{16}}  \right) \nonumber
\\
&\le & c_3 \sum_{l=k\cdot 2^j}^\infty 2^{-j}  (4k_0+2l\cdot 2^{-j})^{d-1}e^{-\frac{2^j (2k_0+l\cdot 2^{-j})}{16}}\nonumber
\\
&\le & c_3\sum_{u=k}^\infty \sum_{l=2^j\cdot u}^{2^j(u+1)-1} 2^{-j}(4k_0+2l\cdot 2^{-j})^{d-1}e^{-\frac{2^j (2k_0+l\cdot 2^{-j})}{16}}\nonumber
\\
&\le &  c_3\sum_{u=k}^\infty (4k_0+2u+2)^{d-1}e^{-\frac{2^j(2k_0+u)}{16}}\nonumber
\\
&\le &  c_3 \left(\sup_{x\ge 2k_0} (2x+2)^{d-1}e^{-\frac{x}{16}}\right) \sum_{u=k}^\infty e^{-\frac{2^j(2k_0+u)}{16}} \le c_1\sum_{u=k}^\infty e^{-\frac{2k_0+u}{16}}.\label{6}
\end{eqnarray}
Therefore, for any $\delta>0$, there exists $k_2\in \mathbb{N}$ such that for all $k\ge k_2$,
\begin{equation}\label{7}
 \sum_{l=k\cdot 2^j}^\infty \;\sum_{y\in E^j\cap S_{2k_0+l\cdot 2^{-j}}} \frac{1}{t^{d/2}}e^{-\frac{2^jd_j(x_0 ,y)}{2}} \cdot 2^{-jd} \le c_1\sum_{u=k}^\infty e^{-\frac{2k_0+u}{16}}<\delta_2. 
\end{equation}
Finally, we note that for any $j\ge 1$, $E^j\subset \cup_{l=0}^\infty S_{l\cdot 2^{-j}}$. Thus for the $k_1, k_2$ specified in \eqref{8} and \eqref{7}, there exists $M>0$ sufficiently large such that 
\begin{equation}
\left\{y\in E^j: d_j(x_0, y)\ge M\right\} \subset  \bigcup_{l=(k_1\vee k_2)\cdot 2^j}^\infty \left( E^j\cap S_{2k_0+l\cdot 2^{-j}}  \right) \quad \text{for all }j\ge 1. 
\end{equation}
This combining with \eqref{2}, \eqref{8}, and \eqref{7} implies that given $\delta>0$, for such chosen $M>0$, 
\begin{equation}
\IP^{x_0}\left[ d_j(X_t^j, x_0)\ge M\right]<\delta, \quad \text{for all }j\ge 1. 
\end{equation}
\end{proof}

\begin{lem}\label{L:4.5}
For any fixed  $T>0$, $\delta>0$, there exist $M_3>0$ and an integer  $N_2\ge N_1$ satisfying    $ T>2\cdot 8^{-N_2}$, such that for all $j\ge N_2$ and $M>M_3$,  it holds that
\begin{equation*}
\IP^{y}\left[\big|X^j_t\big|_\rho\le \frac{M}{2}\right]<\delta, \quad \text{for all }|y|_\rho>M,\, t\in [8^{-j}, T-8^{-j}].
\end{equation*} 
\end{lem}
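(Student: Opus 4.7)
The plan is to mimic the strategy of Lemma \ref{L:4.4}, invoking the off-diagonal heat kernel upper bound of Corollary \ref{HKUB} but centering the shell decomposition at the (arbitrary) starting point $y$ rather than at a fixed point. Writing
\begin{equation*}
\IP^y\left[|X_t^j|_\rho \le M/2\right] = \sum_{z\in E^j,\, |z|_\rho \le M/2} p_j(t, y, z)\, m_j(z),
\end{equation*}
the task is to render this tail probability uniformly small in $y$ for all $|y|_\rho > M$, with $M$ and $j$ both large.

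The crucial geometric preliminary is to verify that whenever $M \ge 2k_0$, $|y|_\rho > M$, and $|z|_\rho \le M/2$, one has $d_j(y, z) \ge M/2$. I would argue this by cases on the shortest $G^j$-geodesic between $y$ and $z$. If the geodesic avoids $a^*_j$, it lies in the pure lattice $2^{-j}\IZ^d$, so each edge contributes $2^{-j}$ in both $d_j$ and Euclidean length, forcing $d_j(y, z) \ge |y-z|_\rho \ge M/2$. If instead it passes through $a^*_j$, then $d_j(y, z) \ge d_j(y, a^*_j)$; every lattice neighbor $w$ of $a^*_j$ satisfies $|w|_\rho \le k_0 + 2^{-j}$ (since such $w$ lies within $2^{-j}$ of $K$), and a shortest path from $y$ to $a^*_j$ must factor as a lattice path from $y$ to some such $w$ followed by the single edge $w \to a^*_j$, giving $d_j(y, a^*_j) \ge |y-w|_\rho + 2^{-j} \ge |y|_\rho - k_0 \ge M/2$.

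With this lower bound in hand, Corollary \ref{HKUB} applied for $t \in [8^{-j}, T - 8^{-j}] \subset (0, T]$ yields
\begin{equation*}
\IP^y\left[|X_t^j|_\rho \le M/2\right] \le \sum_{z \in E^j :\, d_j(y,z) \ge M/2} \frac{C_6}{t^{d/2}} \left( e^{-d_j(y,z)^2/(64t)} + e^{-2^j d_j(y,z)/4} \right) m_j(z).
\end{equation*}
I would decompose this sum into integer shells $\{z \in E^j : k \le d_j(y, z) < k+1\}$ for $k \ge \lceil M/2 \rceil$, estimating the $m_j$-measure of each shell by $c\,(k+1)^{d-1}$ via the standard cubic-lattice surface-area bound (the contribution of $a^*_j$ being negligible by Proposition \ref{P1}, since $m_j(a^*_j) \le C\cdot 2^{-j}$ and it appears in at most one shell). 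The first exponential is then handled via the elementary inequality $t^{-d/2} e^{-k^2/(64t)} \le C_d\, k^{-d}\, e^{-k^2/(128T)}$, which collapses that piece to a convergent Gaussian tail $\sum_{k \ge \lceil M/2 \rceil} e^{-k^2/(128T)}$, small for large $M$. For the second exponential, $t \ge 8^{-j}$ gives $t^{-d/2} \le 2^{3jd/2}$, and the doubly-exponential factor $e^{-2^j k/4}$ dominates any polynomial in $2^j$, so the entire tail vanishes as $j \to \infty$ for any fixed $M$.

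The main obstacle is the geometric step: the darning point $a^*_j$ could \emph{a priori} collapse $d_j(y, z)$ by providing a shortcut through $K$, but because $K$ is confined to a fixed bounded region and $y$ is far away, every such tunneling detour still costs at least $|y|_\rho - k_0$ in $d_j$-length, enough to preserve the bound $d_j(y,z) \ge M/2$. Once that is secured, the remaining tail estimates follow exactly the template of Lemma \ref{L:4.4}: choose $M_3$ so the Gaussian tail is below $\delta/2$ for all $M \ge M_3$, then choose $N_2 \ge N_1$ large enough that the second-exponential sum is below $\delta/2$ for all $j \ge N_2$ and that $T > 2 \cdot 8^{-N_2}$.
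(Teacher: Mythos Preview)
Your approach is correct in outline and would yield the lemma, but it differs from the paper's proof in one structural way: the paper does \emph{not} perform a shell decomposition. Because the target set $\{z:|z|_\rho\le M/2\}$ is bounded, the paper simply separates off the single point $a^*_j$ (handled via Proposition \ref{P1}, which gives $m_j(a^*_j)\le c\,2^{-j}$) and then applies the crude volume count $\#\{z\in E^j:|z|_\rho\le M/2\}\le c\,M^d\,2^{jd}$ together with the uniform heat-kernel bound $p_j(t,y,z)\le C\,t^{-d/2}(e^{-M^2/(256t)}+e^{-2^jM/8})$, valid for every $z$ in the target set. This yields directly
\[
\IP^y\bigl[|X^j_t|_\rho\le M/2\bigr]\;\le\; c\,M^d\,t^{-d/2}e^{-M^2/(256t)}\;+\;c\,M^d\,2^{3jd/2}e^{-2^jM/8},
\]
and each term is made small by choosing $M$, then $j$, large. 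Your shell decomposition is the Lemma \ref{L:4.4} template, tailored to \emph{unbounded} target sets; here it works but is unnecessary.

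One genuine caution in your argument: the shell bound $m_j(\{z:k\le d_j(y,z)<k+1\})\le c(k+1)^{d-1}$ is not literally ``the standard cubic-lattice surface-area bound'', because the $d_j$-metric carries a shortcut through $a^*_j$. A $d_j$-shell around $y$ can contain, in addition to the usual $\ell^1$-shell around $y$, a second shell of points near $K$ reached via $a^*_j$ (those $z$ with $d_j(a^*_j,z)\in[k-d_j(y,a^*_j),\,k+1-d_j(y,a^*_j))$). Both pieces do have $m_j$-measure $O(k^{d-1})$, so the bound holds, but it is not as automatic as you suggest. A cleaner fix is to use the ball bound $m_j(\{d_j(y,z)<k+1\})\le c(k+1)^d$, which follows immediately from $d_j\ge\rho$ and a volume count; the Gaussian factor $e^{-k^2/(128T)}$ absorbs the extra power of $k$.
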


\begin{proof}
For a generic $M>0$, given $|y|_\rho>M$ and $t\in  [8^{-j}, T-8^{-j}]$,
\begin{eqnarray}
\IP^{y}\left[\big|X^j_t\big|_\rho\le \frac{M}{2}\right] & =& \sum_{\substack{\rho(x, a^*_j)\le M/2\\ x\neq a^*_j}} p_j(t, y, x)m_j(x)+p_j(t,y, a^*_j)m_j(a^*_j) \label{12}
\end{eqnarray}
In view of Proposition \ref{P1} and Corollary \ref{HKUB}, since $d_j(y, a^*_j)\ge \rho(y, a^*_j)\ge M$ and $t\in[8^{-j}, T-8^{-j}]$, there exists $c_1>0$ such that
\begin{eqnarray}
p_j(t,y, a^*_j)m_j(a^*_j) &\le & \frac{c_12^{-j}}{t^{d/2}}\left( e^{-\frac{M^2}{64t}} + e^{-\frac{2^j M}{4}} \right) \le c_1\left( \frac{1}{t^{d/2}}e^{-\frac{M^2}{64t}}+\frac{2^{-j}}{8^{-jd/2}}e^{-\frac{2^jM}{4}}\right). \label{10}
\end{eqnarray}
Given any $\delta>0$, there exists $c_2>0$ such that when $M\ge c_2$, 
\begin{equation*}
\sup_{0<t\le T} \frac{c_1}{t^{d/2}}e^{-\frac{M^2}{64t}}<\frac{\delta}{4}.
\end{equation*}
For the second term on the right hand side of \eqref{10}, given any $\delta>0$,  there exist  $k_1\in \mathbb{N}$ and $c_3>0$   such that for all $j\ge k_1$ and all $M>c_3$ speicified above, 
\begin{equation*}
\frac{c_12^{-j}}{8^{-jd/2}}e^{-\frac{2^jM}{2}} =c_1 (2^j)^{\frac{3d}{2}-1}e^{-\frac{M\cdot2^j}{2}}<\frac{\delta}{4}. 
\end{equation*}
Thus we have showed that given any $\delta>0$, there exist $c_4>0$ and $k_2\in \mathbb{N}$ such that for all $M>c_4$ and all $j\ge k_1$, 
\begin{equation}\label{11}
p_j(t,y, a^*_j)m_j(a^*_j)<\frac{\delta}{2}.
\end{equation}
For the other term on the right hand side of \eqref{12}, first we note that since  $K\subset \{(x_1, \dots , x_d)\in \IR^d: \max_{1\le i\le d}|x_i|\le k_0 \}$, 
\begin{equation*}
\left\{x\in E^j: \rho(x, a^*_j) \le \frac{M}{2} \right\} \subset \left\{(x_1, \dots , x_d)\in \IR^d:\max_{1\le i\le d} |x_i|\le k_0+2M  \right\}.
\end{equation*}
This implies that there exists some $c_5>0$ such that for $M\ge 2k_0$,
\begin{equation}\label{36}
\#\left\{x\in E^j: \rho(x, a^*_j) \le \frac{M}{2} \right\} \le c_5\cdot M^d\cdot 2^{jd}.
\end{equation}
Again since $d_j(y, x)\ge \rho(y,x)\ge M$ and $t\in[8^{-j}, T-8^{-j}]$, by Corollary \ref{HKUB},
\begin{eqnarray}
\sum_{\substack{\rho(x, a^*_j)\le M/2\\ x\neq a^*_j}} p_j(t, y, x)m_j(x) & \le & \sum_{\substack{\rho(x, a^*_j)\le M/2\\ x\neq a^*_j}}c_6 \left( \frac{1}{t^{d/2}}e^{-\frac{M^2}{256t}}+\frac{1}{8^{-jd/2}}e^{-\frac{2^jM}{8}} \right) 2^{-jd} \nonumber
\\
\eqref{36}   &\le & c_6\cdot M^d \cdot 2^{jd}\left(\frac{1}{t^{d/2}}e^{-\frac{M^2}{256t}}+8^{jd/2}e^{-\frac{2^jM}{8}} \right)\cdot 2^{-jd} \nonumber
\\
(M\ge 2k_0)  &\le &  c_6M^de^{-\frac{M^2}{512T}}\left( \sup_{0<t\le T} e^{-\frac{4k_0^2}{512T}}\right)  + c_6\cdot M^d\cdot  2^{\frac{3jd}{2}}e^{-\frac{2^jM}{8}} \nonumber
\\
&\le &  c_6M^de^{-\frac{M^2}{512T}}  + c_6\cdot M^d\cdot  2^{\frac{3jd}{2}}e^{-\frac{2^jM}{8}}.
\label{38}
\end{eqnarray}
Given any $\delta>0$, for the first term on the right hand side of \eqref{38}, there exists $c_7\ge 2k_0$ such that for all $M\ge c_7$, $c_6\cdot M^d\cdot e^{-\frac{M^2}{512T}} <\delta/4$.   For the second term on the right hand side of \eqref{12}, we first denote by $c_8:= \sup_{M\ge c_7} M^de^{-M/16}$. Thus for all $M\ge c_7$,
\begin{equation}
c_6\cdot M^d 2^{\frac{3jd}{2}}e^{-\frac{2^jM}{8}} \le c_6\cdot M^d 2^{\frac{3jd}{2}}e^{-\frac{2^jM}{16}}\cdot e^{-\frac{M}{16}} \le c_6c_8 \cdot 2^{\frac{3jd}{2}} e^{-\frac{2^jc_7}{16}}.
\end{equation}
From the last display above, one can tell that  there exists $k_3\in \mathbb{N}$ such that for all $j\ge k_3$ and all $M\ge c_1$,
\begin{equation}\label{37}
c_6\cdot M^d\cdot  2^{\frac{3jd}{2}}e^{-\frac{2^jM}{8}} <\frac{\delta}{4}.
\end{equation} 
Combining \eqref{37} with the previous discussion regarding the first term on the right hand side of \eqref{12}, it has been shown that there exist $c_7>0$ and $k_3\in \mathbb{N}$ such that for all $M\ge c_7$ and all $j\ge k_3$
\begin{equation}\label{13}
\sum_{\substack{\rho(x, a^*_j)\le M/2\\ x\neq a^*_j}} p_j(t, y, x)m_j(x) <\frac{\delta}{4}+\frac{\delta}{4}=\frac{\delta}{2}.
\end{equation}
Finally, combining \eqref{12}, \eqref{11}, and  \eqref{13}, it has been shown that for all $M\ge c_4\vee c_7$ and all $j\ge k_2\vee k_3$, 
\begin{equation*}
\IP^{y}\left[\big|X^j_t\big|_\rho\le \frac{M}{2}\right]<\delta, \quad \text{for all }|y|_\rho>M,\, t\in [8^{-j}, T-8^{-j}].
\end{equation*} 
\end{proof}

\begin{lem}\label{L:4.6}
Fix $x_0\in \bigcap_{j\ge 1}E^j$. For every $T>0$ and every $\delta>0$, for the $N_2$   and $M_3$ given in Lemma \ref{L:4.5}, it holds for  all $j\ge N_2$ and all $M>M_3$  that
\begin{equation*}
\IP^{x_0}\left[8^{-j}\le \tau_M\le T-8^{-j},\, \left|X^j_T\right|_\rho\le  \frac{M}{2} \right]<\delta.
\end{equation*}
\end{lem}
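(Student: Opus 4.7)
The plan is to reduce the claim to Lemma \ref{L:4.5} via the strong Markov property of $X^j$ at the stopping time $\tau_M$. Write $A_j := \{8^{-j} \le \tau_M \le T - 8^{-j}\}$. Since $X^j$ is a Hunt process (being associated with a regular Dirichlet form), and $A_j \in \mathcal{F}_{\tau_M}$, conditioning on $\mathcal{F}_{\tau_M}$ gives
\[
\mathbb{P}^{x_0}\!\left[A_j,\, |X^j_T|_\rho \le M/2\right]
= \mathbb{E}^{x_0}\!\left[\mathbf{1}_{A_j}\cdot \mathbb{P}^{X^j_{\tau_M}}\!\left[|X^j_{T-\tau_M}|_\rho \le M/2\right]\right].
\]

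The next step is to verify that the hypotheses of Lemma \ref{L:4.5} are met inside the expectation on the right-hand side. By definition of $\tau_M$, on $\{\tau_M < \infty\}$ the position $y := X^j_{\tau_M}$ satisfies $|y|_\rho \ge M$. Moreover, on $A_j$ one has $T - \tau_M \in [8^{-j},\, T - 8^{-j}]$, which is precisely the admissible time window in Lemma \ref{L:4.5}. Thus, provided $j \ge N_2$ and $M > M_3$, Lemma \ref{L:4.5} applies pointwise on $A_j$ and yields
\[
\mathbb{P}^{X^j_{\tau_M}}\!\left[|X^j_{T-\tau_M}|_\rho \le M/2 \right] < \delta.
\]
Taking expectations then gives $\mathbb{P}^{x_0}[A_j,\, |X^j_T|_\rho \le M/2] \le \delta \cdot \mathbb{P}^{x_0}[A_j] \le \delta$, which is the desired bound. (If the strict inequality is important, one simply applies Lemma \ref{L:4.5} with $\delta/2$ in place of $\delta$.)

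I do not foresee any genuine obstacle; the argument is a routine packaging of the strong Markov property with the pointwise estimate from Lemma \ref{L:4.5}. The only points meriting a line of justification are (a) that $\tau_M$ is an $(\mathcal{F}_t)$-stopping time (immediate, as $\tau_M$ is the first entry of the c\`adl\`ag process $|X^j|_\rho$ into the closed set $[M,\infty)$), and (b) that Lemma \ref{L:4.5}'s estimate is uniform over the starting point $y$ with $|y|_\rho > M$, so it can be freely applied at the random point $X^j_{\tau_M}$ under the expectation.
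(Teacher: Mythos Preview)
Your proof is correct and follows essentially the same route as the paper: both apply the strong Markov property of $X^j$ at $\tau_M$, use that $|X^j_{\tau_M}|_\rho \ge M$ and $T-\tau_M\in[8^{-j},T-8^{-j}]$ on the event in question, and then invoke the uniform bound of Lemma~\ref{L:4.5}. The paper merely phrases the conditioning as an integral $\int_{8^{-j}}^{T-8^{-j}}\IE^{x_0}[\,\cdot\,;\tau_M\in ds]$ and passes to the supremum over admissible $(y,t)$ before applying Lemma~\ref{L:4.5}, which is the same argument in slightly different notation.
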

\begin{proof}
\begin{eqnarray}
&&\IP^{x_0}\left[8^{-j}\le \tau_M\le T-8^{-j},\, \left|X^j_T\right|_\rho\le  \frac{M}{2} \right]  \nonumber
\\
&=&\int_{8^{-j}}^{T-8^{-j}}\IE^{x_0}\left[ \IP^{X^j_s}\left[\left|X^j_{T-s}\right|\le \frac{M}{2}\right]; \tau_M\in ds\right] \nonumber
\\
&\le &  \int_{8^{-j}}^{T-8^{-j}} \IE^{x_0}\left[ \sup_{\substack{y: |y|_\rho\ge M \\ t\in [8^{-j}, T-8^{-j}}]} \IP^{y}\left[|X^j_t|_\rho\le \frac{M}{2} \right];\tau_M\in ds \right] \nonumber
\\
& \le &  \sup_{\substack{|y|_\rho\ge M\\ t\in [8^{-j}, T-8^{-j}]}}\IP^y\left[|X^j_t|_\rho \le \frac{M}{2}\right].
\end{eqnarray}
The conclusion then follows from Lemma \ref{L:4.5}.
\end{proof}

\begin{prop}\label{P:4.6}
Given $x_0 \in  \cap_{j\ge 1} E^j$,  given any $T>0$, $\delta>0$, there exist $M_4>0$ and an integer $N_3\ge N_1$,  such that for all $j\ge N_3$,
\begin{equation}
\IP^{x_0}\left[\sup_{t\in [0, T]}\big|X^j_t\big|_\rho>M_4\right]<\delta. 
\end{equation} 
\end{prop}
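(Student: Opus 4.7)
The plan is to invoke Lemma \ref{L:4.2}, which decomposes $\IP^{x_0}[\sup_{t\in[0,T]}|X^j_t|_\rho > M]$ into a sum of four terms, and to bound each of them by $\delta/4$ by choosing $M=M_4$ sufficiently large and $j \ge N_3$ sufficiently large. Throughout, I will freely absorb the fixed quantity $|x_0|_\rho$ into the threshold by taking $M_4 \gg |x_0|_\rho$, and exploit the fact that the graph metric dominates the Euclidean one, $d_j \ge \rho$ on $E^j \times E^j$.

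For the first term, which is a supremum over the short window $[0, 8^{-j}]$, the triangle inequality $|X^j_t|_\rho \le |x_0|_\rho + \rho(x_0, X^j_t)$ reduces the question to a short-time displacement bound uniform in the starting point, which is precisely the content of Lemma \ref{L:4.3}. For the second term $\IP^{x_0}[|X^j_T|_\rho \ge M/2]$, the same triangle inequality together with $d_j \ge \rho$ reduces the question to an estimate on $\IP^{x_0}[d_j(X^j_T, x_0) \ge M/4]$, which is handled by Lemma \ref{L:4.4} as long as $j$ is large enough that $8^{-j} < T$. The fourth term is exactly Lemma \ref{L:4.6}.

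The third term is the most delicate one, and it is where the strong Markov property of $X^j$ comes in. On the event $\{T - 8^{-j} \le \tau_M \le T,\,|X^j_T|_\rho \le M/2\}$ one has $|X^j_{\tau_M}|_\rho \ge M$ while $|X^j_T|_\rho \le M/2$, so the triangle inequality forces $\rho(X^j_{\tau_M}, X^j_T) \ge M/2$; since the elapsed time $T - \tau_M$ is at most $8^{-j}$, conditioning on $\mathcal{F}_{\tau_M}$ and applying the strong Markov property bounds the probability in question by $\sup_{y \in E^j}\IP^y[\sup_{s \in [0, 8^{-j}]}\rho(y, X^j_s) \ge M/2]$, which is $<\delta/4$ for $M$ large by a second application of Lemma \ref{L:4.3}.

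Combining the four estimates with $M_4$ taken to be the maximum of the thresholds produced along the way and $N_3 \ge N_2$ large enough that $8^{-N_3} < T/2$ yields the claim. The main obstacle I anticipate is the strong Markov reduction of the third term: it requires the supremum over starting points in Lemma \ref{L:4.3} to be genuinely uniform in $y \in E^j$, and it requires careful bookkeeping of the three distance notions $\rho$, $d_j$, and $|\cdot|_\rho$ so that the triangle-inequality conversions between them cost only absolute constants and can be absorbed into $M_4$.
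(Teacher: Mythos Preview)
Your proposal is correct and follows essentially the same route as the paper: invoke Lemma~\ref{L:4.2} to split into four terms, handle them via Lemmas~\ref{L:4.3}, \ref{L:4.4}, \ref{L:4.6}, and the strong Markov reduction for the window $[T-8^{-j},T]$, respectively. Your treatment of the third term via the strong Markov property followed by Lemma~\ref{L:4.3} is exactly the argument the paper imports from \cite[(4.18)]{Lou2}; the paper's concluding line ``(iii) follows from Lemma~\ref{L:4.5}'' appears to be a slip for Lemma~\ref{L:4.3}, which is precisely what you invoke.
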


\begin{proof}
On account of Lemma \ref{L:4.2}, it suffices to show that given any $\delta>0$, $T>0$, there exist  $c_1>0$ and  $n_1\in \mathbb{N}$ with $8^{-n_1}<T/2$,  such that for all $M>c_1$ and all $j\ge n_1$, the following hold:
\begin{description}
\item{(i)} $\IP^{x_0}\left[ \sup_{t\in [0, 8^{-j}]} |X^j_t|_\rho\ge M \right]<\delta$;
\item{(ii)} $\IP^{x_0}\left[ \left|X_T^j \right|_\rho \ge \frac{M}{2}  \right]<\delta$.
\item{(iii)} $\IP^{x_0}\left[ T-8^{-k}\le \tau_M  \le T,\, \left|X^j_T\right|_\rho\le \frac{M}{2}\right]<\delta $;
\item{(iv)} $\IP^{x_0}\left[8^{-j}\le \tau_M\le T-8^{-j},\, \left|X^j_T\right|_\rho\le  \frac{M}{2} \right]<\delta$.
\end{description}

We claim that  all (i)-(iv) are satisfied for all $M\ge |x_0|_\rho+2\cdot \max_{1\le i\le 4}M_i$ and $j\ge \max_{1\le i\le 3}N_i$, where the $M_i$'s and the $N_i$'s are given in Lemmas \ref{L:4.3}, \ref{L:4.4} and \ref{L:4.6}. Actually it is  evident that (i) and (ii) hold on account of Lemma \ref{L:4.3} and \ref{L:4.4}, respectively, together with triangle inequality for distances.  (iv) holds in view of Lemma \ref{L:4.6}. To justify (iii), by the same argument as that for \cite[(4.18)]{Lou2} we have
\begin{eqnarray}
&&\IP^{x_0}\left[ T-8^{-j}\le \tau_M  \le T,\, \left|X^j_T\right|_\rho\le \frac{M}{2}\right] \le  \sup_{|y|_\rho\ge M} \IP^y\left[\sup_{t\in [0, 8^{-j}]}   \rho\left(X^j_t, X^j_0\right)\geq\frac{M}{2}\right].
\end{eqnarray}
Thus (iii) follows from Lemma \ref{L:4.5}.
\end{proof}

As a standard notation, given a metric $d(\cdot,\, \cdot)$, we denote by
\begin{equation*}
w_d(x,\, \theta,\, T):=\inf_{\{t_i\}_{1\le i\le n}\in \Pi} \max_{1\le i\le n} \sup_{s, t\in [t_i, t_{i-1}]} d(x(s), x(t)),
\end{equation*}
where $\Pi$ is the collection of all possible partitions of the form $0=t_0<t_1<\cdots <t_{n-1}<T\le t_n$ with $\min_{1\le i\le n} (t_i-t_{i-1})\ge \theta$ and $n\ge 1$.

\begin{prop}\label{P:4.7}
Fix any $x_0\in \cap_{j\ge 1}E^j$.   For any $T>0$, $\delta_1, \delta_2>0$, there exist  $\delta_3>0$ and  an integer $N_4\ge N_1$ such that for all $j\ge N_4$,
\begin{equation}
\IP^{x_0}\left[w_\rho\left(X^j,\delta_3, T\right)>\delta_1\right]<\delta_2, 
\end{equation}
where
\begin{equation*}
w_\rho(x,\, \delta_3,\, T):=\inf_{\{t_i\}} \max_{i} \sup_{s, t\in [t_i, t_{i-1}]} \rho(x(s), x(t)),
\end{equation*}
where $\{t_i\}$ ranges over all possible partitions of the form $0=t_0<t_1<\cdots <t_{n-1}<T\le t_n$ with $\min_{1\le i\le n} (t_i-t_{i-1})\ge \delta_3$ and $n\ge 1$.

\end{prop}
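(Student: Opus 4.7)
The plan is to combine the confinement provided by Proposition \ref{P:4.6}, a standard exit-time decomposition, and the heat kernel bound of Corollary \ref{HKUB}.

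Given $T, \delta_1, \delta_2 > 0$, use Proposition \ref{P:4.6} to pick $M > 0$ and $N \ge N_1$ so that $\IP^{x_0}[\sup_{t \le T}|X^j_t|_\rho > M] < \delta_2/2$ for every $j \ge N$. Partition $[0, T]$ by $t_i := i\delta_3$ with $\delta_3 > 0$ to be chosen below and $n := \lceil T/\delta_3\rceil$; note that
\[
w_\rho(X^j, \delta_3, T) \le 2\max_{1 \le i \le n}\sup_{t \in [t_{i-1}, t_i]}\rho(X^j_t, X^j_{t_{i-1}}).
\]
A union bound combined with the Markov property of $X^j$ then gives
\[
\IP^{x_0}[w_\rho(X^j, \delta_3, T) > \delta_1] \le \frac{\delta_2}{2} + n \sup_{y \in E^j} \IP^y\!\left[\sup_{s \le \delta_3}\rho(X^j_s, y) > \delta_1/4\right].
\]

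Next, setting $\sigma := \inf\{t : \rho(X^j_t, y) > \delta_1/4\}$ and splitting on whether $\rho(X^j_{\delta_3}, y) > \delta_1/8$, the standard strong Markov decomposition yields
\[
\sup_{y \in E^j}\IP^y\!\left[\sup_{s \le \delta_3}\rho(X^j_s, y) > \delta_1/4\right] \le 2\sup_{z \in E^j,\, s \le \delta_3}\IP^z[\rho(X^j_s, z) > \delta_1/8].
\]
The key to estimating the right-hand side is the pointwise comparison $d_j(x, y) \ge \rho(x, y)$ on $E^j \times E^j$: geodesics in $G^j$ that avoid $a^*_j$ lie in $2^{-j}\IZ^d$, so their length dominates the Manhattan and hence the Euclidean distance; geodesics through $a^*_j$ have length $d_j(x, a^*_j) + d_j(a^*_j, y)$, and each term dominates $\rho(\cdot, a^*_j)$ since any neighbor $y'$ of $a^*_j$ satisfies $|y - y'| \ge \rho(y, a^*_j) - 2^{-j}$, with the $\rho$-triangle inequality through $a^*_j$ finishing the bound. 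Therefore
\[
\IP^z[\rho(X^j_s, z) > r] \le \!\!\sum_{w \in E^j:\, d_j(w, z) > r}\!\!p_j(s, z, w)\, m_j(w).
\]
For $s \in (0, 8^{-j}]$, this is at most $1 - e^{-\lambda_j s} \le \lambda_j s \le 2^{-j}$ since $\lambda_j = 2^{2j}$. For $s \in (8^{-j}, \delta_3]$, inserting Corollary \ref{HKUB}, controlling the atomic contribution from $w = a^*_j$ by the mass bound $m_j(a^*_j) \le C\cdot 2^{-j}$ of Proposition \ref{P1}, and estimating the remaining lattice sum by a Gaussian Riemann sum as in the chain of inequalities \eqref{5}--\eqref{8}, one obtains uniformly in $z \in E^j$ and $s \le \delta_3$,
\[
\IP^z[\rho(X^j_s, z) > \delta_1/8] \le C_1 \exp(-c_1\delta_1^2/\delta_3) + o_j(1).
\]

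Assembling the pieces,
\[
\IP^{x_0}[w_\rho(X^j, \delta_3, T) > \delta_1] \le \frac{\delta_2}{2} + \frac{4T}{\delta_3}\!\left(C_1 e^{-c_1\delta_1^2/\delta_3} + o_j(1)\right).
\]
Choose $\delta_3 > 0$ small enough that $(4T/\delta_3)\, C_1 e^{-c_1\delta_1^2/\delta_3} < \delta_2/4$ (possible because Gaussian decay beats $1/\delta_3$), then $N_4 \ge N$ large enough so that the $o_j(1)$ contribution is $< \delta_2/4$ for all $j \ge N_4$. \textbf{The main obstacle} is the uniform-in-$z$ control of $\IP^z[\rho(X^j_s, z) > r]$ near the darning point, where the graph metric $d_j$ can collapse; it is resolved by the universal comparison $d_j \ge \rho$ on $E^j$, which transfers the Gaussian decay of Corollary \ref{HKUB} directly from $d_j$ to $\rho$.
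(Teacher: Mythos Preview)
Your proof is correct and follows essentially the same route as the paper's: reduce the modulus-of-continuity bound to a uniform single-time displacement probability $\sup_{z,\,s\le\delta_3}\IP^z[\rho(X^j_s,z)>r]$, split $s$ at $8^{-j}$, and control the long-time range via Corollary~\ref{HKUB} together with $d_j\ge\rho$ and a Gaussian lattice sum. The initial confinement step through Proposition~\ref{P:4.6} is superfluous here---the paper does not invoke it in this proof, and since you take the supremum over all $y\in E^j$ anyway it contributes nothing---but it does no harm.
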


\begin{proof}
Using the same argument as that for \cite[(4.52)-(4.54)]{Lou2}, one can get for$j\in \mathbb{N}$ and any $\delta_1, \delta_3>0$,
\begin{equation}\label{14}
\IP^{x_0}\left[w_\rho\left(X^j,\delta_3, T\right)>\delta_1\right] \le  2\left(\left[\frac{T}{\delta_3}\right]+1\right) \sup_{\substack{y\in E\\ 0\le s\le \delta_3}}\IP^y\left[ \rho\left( X^j_0, X^j_s\right)\ge \frac{\delta_1}{4} \right].
\end{equation}
For the right hand side of \eqref{14}, we further have that for $\delta_3>0$ and $j\in \mathbb{N}$ satisfying  $8^{-j}<\delta_3$, 
\begin{equation}\label{15}
\sup_{0<s\le \delta_3}\IP^y\left[ \rho\left( X^j_0, X^j_s\right)\ge \frac{\delta_1}{4} \right] \le \sup_{0<s\le 8^{-j}}\IP^y\left[ \rho\left( X^j_0, X^j_s\right)\ge \frac{\delta_1}{4} \right] +\sup_{8^{-j}\le s\le \delta_3}\IP^y\left[ \rho\left( X^j_0, X^j_s\right)\ge \frac{\delta_1}{4} \right].
\end{equation} 
We first handle the second term on the right hand side above. For any  $\delta_3>0,\, t>0,\, j\in \mathbb{N}$ such that $8^{-j}<t<\delta_3$, for any $y\in E^j$, there exists some $c_1>0$ such that
\begin{eqnarray}
 \IP^y\left[\rho (y, X_t^j)\ge \frac{\delta_1}{4}   \right]  &\le &  \IP^y\left[d_j (y, X_t^j)\ge \frac{\delta_1}{4}   \right] \nonumber
\\
& \le & \sum_{\substack{x\in E^j: d_j(x, y)\ge \frac{\delta_1}{4}\\ x\neq a^*_j}}\frac{c_1}{t^{d/2}}\left(e^{-\frac{d_j(x, y)^2}{64t}}
+e^{-\frac{2^j d_j(x,y)}{4}}\right)m_j(x) \nonumber
\\
& + & \frac{c_1}{t^{d/2}}\left(e^{-\frac{\delta_1^2}{256t}}+e^{-\frac{2^j \delta_1}{16}}\right)m_j(a^*_j). \label{16}
\end{eqnarray}
For the first term on the right hand side of \eqref{16}, we first note that for a given $y\in E^j$ 
we have 
\begin{equation*}
\left\{  x\in E^j:\, d_j(x, y)\ge \frac{\delta_1}{4}, \,  x\neq a^*_j\right\}\subset \bigcup_{k=1}^\infty \left\{  x\in E^j:\, d_j(x, y)\le   \frac{\delta_1}{4}+k, \,  x\neq a^*_j    \right\}.
\end{equation*} 
In view of the definition of $k_0$ in \eqref{def-k0}, the diameter of $K$ under Euclidean distance is at most $2k_0$. Thus for all $j\ge 1$ and all $x,y\in E^j$, it holds
\begin{equation*}
d_j(x, y)\ge \rho(x,y), \quad \rho(x,y)+2k_0\ge |x-y|.
\end{equation*}
Therefore,
\begin{equation*}
\left\{  x\in E^j:\, d_j(x, y)\le  \frac{\delta_1}{4}+k, \,  x\neq a^*_j  \right\}
\subset \left\{ x\in 2^{-j}\IZ^d:\, |x-y|\le \frac{\delta_1}{4}  +k+2k_0 \right\}.
\end{equation*}
It then follows that for any $k\in \mathbb{N}$,
\begin{equation}\label{17}
\#\left\{  x\in E^j:\, d_j(x, y)\le  \frac{\delta_1}{4}+k, \,  x\neq a^*_j    \right\}\le \left(\frac{\delta_1}{4}+k+2k_0\right)^{2d}2^{jd}.
\end{equation}
Now for the first term on the right hand side of \eqref{16}, we have 
\begin{eqnarray}
&& \sum_{\substack{x\in E^j: d_j(x, y)\ge \frac{\delta_1}{4}\\ x\neq a^*_j}}\frac{c_1}{t^{d/2}}\left(e^{-\frac{d_j(x, y)^2}{64t}}
+e^{-\frac{2^j d_j(x,y)}{4}}\right)m_j(x) \nonumber
\\
&\le & \sum_{k=0}^\infty\; \sum_{\substack{x\in E^j: d_j(x, y)\le \frac{\delta_1}{4}+k\\ x\neq a^*_j}}\frac{c_1}{t^{d/2}}\left(e^{-\frac{d_j(x, y)^2}{64t}}
+e^{-\frac{2^j d_j(x,y)}{4}}\right)m_j(x) \nonumber
\\
\eqref{17} &\le & \sum_{k=0}^\infty \frac{c_1}{t^{d/2}}\left( e^{-\frac{(\delta+k)^2}{256t}} +e^{-\frac{2^j(\delta+k)}{16}} \right)\cdot (\delta_1+k+2k_0)^{2d}2^{jd} 2^{-jd}\nonumber
\\ 
&\le & \sum_{k=0}^\infty \frac{c_1(\delta_1+k+2k_0)^{2d}}{t^{d/2}}e^{-\frac{(\delta_1+k)^2}{256t }}+\sum_{k=0}^\infty \frac{c_1(\delta_1+k+2k_0)^{2d}}{t^{d/2}} e^{-\frac{2^j (\delta_1+k)}{16}}. \label{18-2}
\end{eqnarray}
Now, for the first term  on the right hand side of \eqref{18-2}, for any $8^{-j}<t<\delta_3\le T$,
\begin{eqnarray}
\sum_{k=0}^\infty \frac{c_1(\delta_1+k+2k_0)^{2d}}{t^{d/2}}e^{-\frac{(\delta_1+k)^2}{256t }} & \le & e^{-\frac{\delta_1^2}{512\delta_3}}\left(  \sup_{t\in (0, T]}\frac{c_1}{t^{d/2}}e^{-\frac{\delta_1^2}{1024T}}\right)\sum_{k=0}^\infty (\delta_1+k+2k_0)^{2d}e^{-\frac{(\delta_1+k)^2}{1024T}} \nonumber
\\
&\le & c_2(\delta_1, T, k_0)e^{-\frac{\delta_1^2}{512\delta_3}}. \label{19}
\end{eqnarray}
For the second term on the right hand side of \eqref{18-2}, noticing that $t\ge 8^{-j}$,
\begin{eqnarray}
\sum_{k=0}^\infty \frac{c_1(\delta_1+k+2k_0)^{2d}}{t^{d/2}} e^{-\frac{2^j (\delta_1+k)}{16}}  &\le &  c_1\left( (2^j)^{3d/2}e^{-\frac{2^j\delta_1}{32}} \right)\sum_{k=0}^\infty (\delta_1+k+2k_0)^{2d}e^{-\frac{\delta_1+k}{32}} \nonumber
\\
&\le & \left( (2^j)^{\frac{3d}{2}}e^{-\frac{2^j\delta_1}{32}} \right)\cdot c_3(\delta_1, k_0). \label{21}
\end{eqnarray}
Now we deal with the second term on the right hand side of \eqref{16}.   For any $t<T$ where $\delta_3>0$ is a generic constant,
\begin{eqnarray}
&& \frac{c_1}{t^{d/2}}\left(e^{-\frac{\delta_1^2}{256t}}+e^{-\frac{2^j \delta_1}{16}}\right)m_j(a^*_j) \nonumber
\\
(\text{Proposition }\ref{P1}) & \le &  \frac{c_4}{t^{d/2}}\left(e^{-\frac{\delta_1^2}{256t}}+e^{-\frac{2^j \delta_1}{16}}\right) 2^{-j} \nonumber
\\
& \le & c_4\cdot e^{-\frac{\delta_1^2}{512\delta_3}}\left(\sup_{t\in (0, T]}\frac{1}{t^{d/2}}e^{-\frac{\delta_1^2}{512t}}\right)+c_4\cdot  (2^j)^{\frac{3d}{2}-1} e^{-\frac{2^j \delta_1}{16}} \nonumber
\\
&\le & c_5(\delta_1, T)e^{-\frac{\delta_1^2}{512\delta_3}}+c_4\cdot  (2^j)^{\frac{3d}{2}-1} e^{-\frac{2^j \delta_1}{16}}. \label{20}
\end{eqnarray}
From here, first we replace the two terms on the right hand side of \eqref{18-2} with \eqref{19} and \eqref{21}, then plug what is obtained as well as \eqref{20} into the right hand side of \eqref{16}. Consolidating the common terms gives us
that for any pair of $\delta_3>0,\, j\in \mathbb{N}$ such that $8^{-j}<\delta_3$,
\begin{eqnarray}
\sup_{\substack{y\in E^j\\ t\in [8^{-j}, \delta_3]}} \IP^y\left[\rho (y, X_t^j)\ge \frac{\delta_1}{4}   \right]  &\le &   c_6(\delta_1, T, k_0)e^{-\frac{\delta_1^2}{512\delta_3}}+c_7(\delta_1, k_0)\left( (2^j)^{\frac{3d}{2}}e^{-\frac{2^j\delta_1}{32}} \right).
\end{eqnarray}
From the above, given any pair of $\delta_1, \delta_2>0$,  we can first choose $\delta_3>0$ sufficiently small such that 
\begin{equation}\label{chosen-delta3}
c_6(\delta_1, T, k_0)e^{-\frac{\delta_1^2}{512\delta_3}}<\frac{\delta_2\delta_3}{4(T+2\delta_3)}.
\end{equation}
With this $\delta_3$  chosen  above, then we choose $n_1>0$ satisfying $8^{-n_1}<\delta_3$ such that for all $j\ge n_1$,
\begin{equation*}
c_7(\delta_1, k_0)\left( (2^j)^{\frac{3d}{2}}e^{-\frac{2^j\delta_1}{32}} \right)<\frac{\delta_2\delta_3}{4(T+2\delta_3)}.
\end{equation*}
Thus given any pair of $\delta_1, \delta_2>0$, there exists $\delta_3>0$ and $n_1\in \mathbb{N}$ with $8^{-n_1}<\delta_3$ such that for all $j\ge n_1$, 
\begin{equation}\label{22}
\sup_{\substack{y\in E^j,\\8^{-j}\le t\le \delta_3}}\IP^y\left[ \rho\left( X^j_0, X^j_t\right)\ge \frac{\delta_1}{4} \right]<\frac{\delta_2\delta_3}{2(T+2\delta_3)}.
\end{equation}
The same argument as that for \cite[(4.50)]{Lou2} yields that given any $\delta_1, \delta_2>0$ and the $\delta_3>0$ therefore selected in \eqref{chosen-delta3}, there exists $n_2$ such that for all $j\ge n_2$,
\begin{equation}\label{23}
\sup_{\substack{y\in E^j,\\0<s\le 8^{-j}}}\IP^y\left[ \rho\left( X^j_0, X^j_s\right)\ge \frac{\delta_1}{4} \right]<\frac{\delta_2\delta_3}{2(T+2\delta_3)}.
\end{equation}
Finally, plugging \eqref{22} and \eqref{23} into the right hand side of \eqref{14} yields that given $\delta_1, \delta_2>0$, there exist $\delta_3>0$ and $n_1, n_2\in \mathbb{N}$ such that for all $j\ge n_1\vee n_2$,
\begin{equation*}
\IP^{x_0}\left[w_\rho\left(X^j,\delta_3, T\right)>\delta_1\right] \le  2\left(\left[\frac{T}{\delta_3}\right]+1\right) \cdot \frac{\delta_2\delta_3}{T+2\delta_3} \le 2\cdot \frac{T+2\delta_3}{\delta_3} \cdot \frac{\delta_2\delta_3}{T+2\delta_3}<2\delta_2.
\end{equation*}
\end{proof}

\begin{thm}\label{T:tightness}
Fix $x_0\in \bigcap_{j\ge 1}E^j$. For every     $T>0$, the laws of $\{X^j, \IP^{x_0}, j\ge 1\}$ are C-tight in the Skorokhod space $\mathbf{D}([0, T],   E, \rho)$ equipped with the Skorokhod topology. 
\end{thm}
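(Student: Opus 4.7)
The plan is to assemble the result from the tightness criterion in Proposition \ref{tightness-criterion} together with Propositions \ref{P:4.6} and \ref{P:4.7}, and then to upgrade ordinary tightness to C-tightness via a deterministic bound on jump sizes.

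First, I would verify the two hypotheses of Proposition \ref{tightness-criterion}. Condition (i) there is exactly the statement of Proposition \ref{P:4.6}, and condition (ii) is exactly the statement of Proposition \ref{P:4.7}. Combining these two gives tightness of $\{X^j, \IP^{x_0}\}_{j\ge 1}$ in $\mathbf{D}([0,T], E, \rho)$.

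Second, to promote this to C-tightness, I would invoke the standard characterization (e.g.\ Jacod--Shiryaev, Chapter VI, Proposition 3.26) that for a tight sequence of c\`adl\`ag processes on a Polish space, C-tightness is equivalent to
\[
\sup_{0\le t\le T} \rho\bigl(X^j_{t-}, X^j_t\bigr) \xrightarrow{\IP^{x_0}} 0 \qquad \text{as } j\to\infty.
\]
By Proposition \ref{P:roadmap}, $X^j$ is a pure-jump Markov chain that moves only along edges of $G^j$, so it suffices to bound the $\rho$-size of each such edge. Consulting the two clauses of \eqref{BMD-2}: an edge $e_{xy}$ with $x,y\in 2^{-j}\IZ^d\cap(\IR^d\setminus K)$ satisfies $\rho(x,y)=|x-y|=2^{-j}$; and an edge $e_{xa^*_j}$ requires $x$ to be adjacent in $\mathcal{G}_j$ to some vertex of $K$, so $d_{|\cdot|}(x,K)\le 2^{-j}$ and hence $\rho(x,a^*_j)\le 2^{-j}$. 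Therefore every jump of $X^j$ has $\rho$-size at most $2^{-j}$, giving the deterministic bound
\[
\sup_{0\le t\le T}\rho\bigl(X^j_{t-}, X^j_t\bigr) \le 2^{-j} \longrightarrow 0,
\]
which combined with tightness yields C-tightness.

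Since all of the analytic heavy lifting (heat-kernel bounds, modulus-of-continuity estimates, and tail estimates on the position of $X^j$) has already been absorbed into Propositions \ref{P:4.6} and \ref{P:4.7}, the theorem itself is essentially bookkeeping: the only conceptual addition is the elementary observation that jump sizes are uniformly bounded by $2^{-j}$. I therefore do not foresee a real obstacle; the main care needed is simply to cite the correct version of the C-tightness characterization on the Polish space $(E,\rho)$.
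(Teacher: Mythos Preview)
Your proposal is correct and matches the paper's approach: the paper's own proof is a single line invoking Propositions \ref{P:4.6} and \ref{P:4.7} together with the tightness criterion from \cite{JS}. Your explicit jump-size bound $\sup_{t\le T}\rho(X^j_{t-},X^j_t)\le 2^{-j}$ to upgrade tightness to C-tightness is a useful clarification, since the paper leaves this passage implicit.
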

\begin{proof}
This follows immediately from \cite[Chapter VI, Proposition 3.21]{JS}. In view of Propositions \ref{P:4.6}-\ref{P:4.7}.
\end{proof}

\begin{rmk}\label{R:4.10}
By the same proof as that to Theorem \ref{T:tightness}, it can be shown that given any $T>0$, the laws of $\{X^j, \IP^{a^*_j}, j\ge 1\}$ are C-tight in the Skorokhod space $\mathbf{D}([0, T],   E, \rho)$ equipped with the Skorokhod topology. 
\end{rmk}

Before proving the next lemma, we define the following class of functions:
\begin{align}
\mathcal{G}:&=\{ f\in C_c^3(\IR^d),\, f=\text{constant on }K. \}.\label{def-class-G}
\end{align}
For $f\in \mathcal{G}$,  we define
\begin{equation}\label{def-wt-Lk}
\mathcal{L}^j f(x):=2^{2j}\sum_{y\leftrightarrow x\text{ in }E^j}\left(f(y)-f(x)\right)\frac{1}{v_j(x)}, \quad \text{ for }x\in E^k_0.
\end{equation}
We also  set 
\begin{equation}\label{def-S0k}
S^j:=\{x\in E^j:\; x\neq a^*_j,\, v_j(x)=2d \text{ in } G^j \},
\end{equation}
where $G^j$ has been defined in \S\ref{Intro}.
\begin{lem}\label{L:4.9}
 For every $\delta>0$ and every $f\in \mathcal{G}$, there exists some $n_{\delta, f}\in \mathbb{N}$, such that for all $j\ge n_{\delta, f}$:
 \begin{description}
\item{(i)} \, $m_j(E^j\backslash S^j)<\delta$;
 \item{(ii)}  As $j\rightarrow \infty$,  $\mathcal{L}^jf$ converges uniformly to 
\begin{equation}\label{def-L}
\mathcal{L}f(x):=\frac{1}{2d}\Delta f(x) +O(1)2^{-j}  \quad \text{on } S^{n_{\delta, f}}.
\end{equation}
 \end{description}
Also, there exists some constant $C_7>0$ independent of $j$ such  that for all $j\ge 1$,
\begin{equation*}
\mathcal{L}^j f(x)\le C_7, \quad \text{ for all }x\in E^j. 
\end{equation*}
\end{lem}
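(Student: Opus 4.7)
The plan is to dispatch (i), (ii), and (iii) in turn; (iii) will be the delicate case. For (i), I would write $E^j\setminus S^j=\{a^*_j\}\cup B^j$, where $B^j:=\{x\in E^j:x\ne a^*_j,\ v_j(x)<2d\}$, and bound each piece separately. By Proposition \ref{P1}, $m_j(\{a^*_j\})=\frac{2^{-jd}}{2d}v_j(a^*_j)\le \frac{C_2}{2d}\cdot 2^{-j}$. Every $x\in B^j$ has at least one of its $2d$ standard-lattice edges touching $K$, so $x$ lies in the $2^{-j}$-tube around $\partial K$; the Minkowski-box argument already used in the proof of Proposition \ref{P1} (covering $\partial K$ by $O(2^{j(d-1)})$ boxes of side $2^{-j}$) yields $\#B^j=O(2^{j(d-1)})$, hence $m_j(B^j)\le 2^{-jd}\#B^j=O(2^{-j})$. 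Choosing $n_{\delta,f}$ large then forces $m_j(E^j\setminus S^j)<\delta$.

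For (ii), fix $x\in S^j$, so $v_j(x)=2d$ and the neighbors of $x$ in $G^j$ are exactly $x\pm 2^{-j}e_i$, $i=1,\dots,d$. Since $f\in C^3$, pairwise Taylor expansion cancels the odd-order terms and gives
\begin{equation*}
\sum_{i=1}^{d}\bigl(f(x+2^{-j}e_i)+f(x-2^{-j}e_i)-2f(x)\bigr)=2^{-2j}\Delta f(x)+O(2^{-3j})\|f\|_{C^3}.
\end{equation*}
Multiplying by $2^{2j}/(2d)$ yields $\mathcal{L}^jf(x)=\frac{1}{2d}\Delta f(x)+O(2^{-j})$, uniformly on $S^j$.

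For (iii), the crucial observation is that $f\in C^3$ together with $f\equiv\text{const}$ on $K$ forces $\nabla f\equiv 0$ on $K$ (by continuity, since the Lipschitz set $K$ is the closure of its interior); a second-order Taylor expansion then produces $|\nabla f(x)|\le c\,d(x,K)$ and $|f(x)-f(a^*_j)|\le c\,d(x,K)^2$ for $x$ in a neighborhood of $K$. I would then split into three cases. On $S^j$, (ii) already bounds $|\mathcal{L}^jf|$. If $x\ne a^*_j$ lies in $E^j\setminus S^j$, then $x\leftrightarrow a^*_j$ in $G^j$, so $d(x,K)\le 2^{-j}$; hence $|\nabla f(x)|=O(2^{-j})$ and Taylor gives $|f(y)-f(x)|=O(2^{-2j})$ for every neighbor $y$ of $x$ (for $y=a^*_j$ one expands instead around the nearest point $z\in K$, using $\nabla f(z)=0$). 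Dividing by $v_j(x)$ cancels the $v_j(x)$ summands and yields $|\mathcal{L}^jf(x)|=O(1)$. For $x=a^*_j$ every $y\leftrightarrow a^*_j$ has $d(y,K)\le 2^{-j}$, so the same Taylor argument gives $|f(y)-f(a^*_j)|=O(2^{-2j})$ and hence $|\mathcal{L}^jf(a^*_j)|=O(1)$.

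The main obstacle is exactly case (iii) at or adjacent to $a^*_j$: a naive Lipschitz bound $|f(y)-f(x)|\le\|\nabla f\|_\infty\cdot 2^{-j}$ combined with $v_j(a^*_j)=O(2^{j(d-1)})$ summands would only yield $|\mathcal{L}^jf|=O(2^j)$, which blows up as $j\to\infty$. The saving factor $2^{-j}$ is supplied precisely by $\nabla f\equiv 0$ on $K$ together with the $C^2$-regularity of $f$, and this is the reason the test class $\mathcal{G}$ is defined with the ``constant on $K$'' requirement.
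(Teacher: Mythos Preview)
Your proposal is correct and follows essentially the same route as the paper: Proposition~\ref{P1} for (i), the second-order Taylor expansion with odd-term cancellation for (ii), and for the uniform bound the crucial observation that $\nabla f\equiv 0$ on $K$ forces $|f(y)-c_K|=O(2^{-2j})$ whenever $y$ is within distance $O(2^{-j})$ of $K$. The only cosmetic difference is in (i): rather than re-running the Minkowski box-counting argument to estimate $\#B^j$, the paper simply observes that $B^j\subset\{x:x\leftrightarrow a^*_j\}$ and invokes $\#\{x:x\leftrightarrow a^*_j\}=v_j(a^*_j)\le C_2\,2^{j(d-1)}$ directly from Proposition~\ref{P1}; and in the boundary case of (iii) the paper bounds $|f(y)-f(x)|$ via $|f(y)-c_K|+|f(x)-c_K|$ (both $O(2^{-2j})$ by Taylor at a point of $K$) instead of your equivalent $|\nabla f(x)|=O(2^{-j})$ followed by Taylor at $x$.
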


\begin{proof}
To claim (i), we notice that $\{E^j\backslash S^j\}\subset \{x=a^*_j \text{ or }x\leftrightarrow a^*_j\}$. Thus by Proposition \ref{P1}, there exists $c_1>0$ such that for all $j\ge j_0$ specified in Proposition \ref{P1},
\begin{eqnarray*}
m_j(E^j\backslash S^j)\le  m_j(\{x=a^*_j \text{ or }x\leftrightarrow a^*_j\})   \le m_j(a^*_j)+v_j(a^*_j)\cdot 2^{-jd}\le c_1 \cdot 2^{-j}.
\end{eqnarray*}
Using Taylor's expansion we have  for any $f\in \mathcal{G}$ and any $j\ge j_0$,
\begin{eqnarray}
\mathcal{\wt{L}}_{j} f(x) & = & 2^{2j}\sum_{y\leftrightarrow x }\Bigg[\sum_{i=1}^d \frac{\partial f(x)}{\partial x_i} (y_i-x_i)+\frac{1}{2}\sum_{i, l=1}^d\frac{\partial^2 f(x)}{\partial x_i\partial x_l} (y_i-x_i)(y_l-x_l) +O(1)|y-x|^3\Bigg]\frac{1}{v_j(x)}. \nonumber
\end{eqnarray}
Hence
\begin{equation}\label{26}
\mathcal{L}^jf(x)=\frac{1}{2d}\Delta f(x)+O(1)2^{-j}, \quad \text{for }x\in S^j.
\end{equation}
Since $\{S^j\}_{j\ge 1}$ is an increasing sequence of sets in $j$, both (i) and (ii) have been justified.   To justify the last claim, we first note that by \eqref{26} and the fact that $f\in C_c^3(\IR^d)$, there exists $c_2>0$ independent of $j$ such that 
\begin{equation}\label{27}
\mathcal{L}^j f(x)\le c_2, \quad \text{for all }x\in \bigcup_{j\ge 1}S^j.
\end{equation}
We denote by $c_K:= f|_K$. Given any $x \leftrightarrow a^*_j$ in $E^j$, by the definition of $G^j$ in \eqref{BMD-2}, there must  exist a point $a\in K$ such that $|x-a|\le 2^{-j}$. Since $f\in C^3_c(\IR^d)$ and is constant on $K$, the first order derivatives of $f$ vanish on $K$.  By Taylor expansion, there exists some constant $c_3>0$ only depending on $f$ but not $j$ such that 
\begin{equation}\label{25}
\left( f(x)-c_K \right)=\left| f(x)-f(a)  \right| \le \sum_{i=1}^d \left| \frac{\partial f}{\partial x_i} (a)  \right|\cdot \left|x-a\right|+c_2\cdot |x-a|^2 \le  c_3\cdot 2^{-2j}.
\end{equation}
Thus  by the definition of $\mathcal{L}^j$ and Proposition \ref{P1},
\begin{equation} \label{40}
\mathcal{L}^j f(a^*_j)=2^{2j}\sum_{x\leftrightarrow a^*_j} \left( f(x)-c_K \right)\cdot v_j(a^*_j)^{-1}\le 2^{2j}\cdot c_3\cdot 2^{-2j}\le c_3. 
\end{equation}
To bound $\mathcal{L}^j f(x)$ for $x\leftrightarrow a^*_j$, we first note in this case,
\begin{eqnarray}\label{24}
\mathcal{L}^j f(x)=2^{2j}\sum_{y\leftrightarrow x}\left( f(y)-f(x)\right)\cdot v_j(x)^{-1} \le 2^{2j}\max_{x: x\leftrightarrow a^*_j} \max_{y: y \leftrightarrow x} \left|f(y)-f(x)  \right|.
\end{eqnarray}
For each $y$ in the second $``\max"$ in \eqref{24}, notice that there exists $a\in K$ such that $|y-a|\le 2\cdot 2^{-j}$. Thus by similar reasoning for \eqref{25}, $|f(y)-c_K|\le c_3\cdot (2\cdot 2^{-j})^2$. Since for all $x$ in the first $``\max"$ in \eqref{24}, $|f(x)-c_K|\le c_3\cdot 2^{-2j}$, by triangle inequality it follows that for some $c_4>0$ independent of $j$, for all $j\ge 1$, $\mathcal{L}^j f(x) \le c_4$ for all $x$ such that $x\leftrightarrow a^*_j$. This together with \eqref{40} shows 
\begin{equation*}
\mathcal{L}^j f(x) \le c_3 \vee c_4,\quad \text{for all }x\in \bigcup_{j\ge 1}(E^j\backslash S^j).
\end{equation*}
This combined with \eqref{27} proves the last claim of this lemma. The proof is complete.
\end{proof}

The following lemma is used in the proof of the main result: Theorem \ref{T:main-result}.
\begin{lem}\label{L:4.11}
Fix $T>0$.  Given any $\delta>0$, there exist  $C_8>0$ and an integer $N_\delta \ge N_1$,  such that for all $j\ge N_\delta$, 
\begin{equation*}
\sup_{t\in [(2^j\delta)^{-2/d}, T]}\IP^{x_0}\left[ X^j_t \notin S^j  \right]\le C_8\delta,
\end{equation*}
\end{lem}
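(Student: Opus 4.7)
The plan is to combine the on-diagonal heat kernel upper bound in Proposition \ref{nash-1} with the sharp estimate for the $m_j$-measure of the ``bad'' set $E^j\setminus S^j$, which consists only of $a^*_j$ and its neighbors in $G^j$. Because $v_j(a^*_j)\le C_2\cdot 2^{j(d-1)}$ by Proposition \ref{P1} and every $x\leftrightarrow a^*_j$ has $m_j(x)\le 2^{-jd}$, a direct reading of the proof of Lemma \ref{L:4.9}(i) gives $m_j(E^j\setminus S^j)\le c\cdot 2^{-j}$ for some $c>0$ and all $j\ge N_0$. This is a polynomial-in-$j$ gain, and the threshold $(2^j\delta)^{-2/d}$ is chosen precisely so that the uniform heat kernel bound cancels it against $\delta$.

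First, I would write, for any $t>0$,
\begin{equation*}
\IP^{x_0}\bigl[X^j_t\notin S^j\bigr] \;=\; \sum_{y\in E^j\setminus S^j} p_j(t,x_0,y)\, m_j(y).
\end{equation*}
By Proposition \ref{nash-1}, there is a constant $C>0$ independent of $j\ge N_1$ such that $p_j(t,x_0,y)\le C/t^{d/2}$ for all $t>0$ and $y\in E^j$. Hence
\begin{equation*}
\IP^{x_0}\bigl[X^j_t\notin S^j\bigr] \;\le\; \frac{C}{t^{d/2}}\, m_j(E^j\setminus S^j) \;\le\; \frac{C\,c\cdot 2^{-j}}{t^{d/2}}.
\end{equation*}

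Second, I would plug in the lower bound $t\ge (2^j\delta)^{-2/d}$, which is equivalent to $t^{d/2}\ge (2^j\delta)^{-1}$. Substituting yields
\begin{equation*}
\IP^{x_0}\bigl[X^j_t\notin S^j\bigr] \;\le\; \frac{C\,c\cdot 2^{-j}}{(2^j\delta)^{-1}} \;=\; C\,c\,\delta.
\end{equation*}
Setting $C_8:=Cc$ gives the claimed inequality uniformly in $t\in [(2^j\delta)^{-2/d},T]$.

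Finally, I would choose $N_\delta\ge N_1$ large enough that $(2^{N_\delta}\delta)^{-2/d}\le T$ so that the time interval in the statement is non-empty; this requires only $2^{N_\delta}\ge \delta^{-1}T^{-d/2}$, which is an explicit lower bound depending on $\delta$ and $T$. There is no substantive obstacle: the argument is a one-line consequence of Proposition \ref{nash-1} and (the proof of) Lemma \ref{L:4.9}(i), and the exponent $2/d$ in the threshold arises naturally from equating $2^{-j}/t^{d/2}$ with $\delta$.
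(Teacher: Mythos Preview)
Your proof is correct and follows essentially the same route as the paper: bound $p_j(t,x_0,y)$ by the on-diagonal estimate $C/t^{d/2}$, use Proposition~\ref{P1} to get $m_j(E^j\setminus S^j)\le c\cdot 2^{-j}$, and then invoke $t^{d/2}\ge (2^j\delta)^{-1}$. The only cosmetic difference is that the paper cites Corollary~\ref{HKUB} (using only its on-diagonal part) and re-derives the measure bound inline rather than pointing back to the proof of Lemma~\ref{L:4.9}(i).
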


\begin{proof}
Given any $\delta>0$, choose $j_\delta\in \mathbb{N}$ large enough such that $(2^{j_\delta}\delta)^{-2/d}<T$. For any  $t\in [(2^j\delta)^{-2/d}, \,T]$ with $j\ge j_\delta$, by Corollary \ref{HKUB} and Proposition \ref{P1}, there exists $c_1, c_2>0$ such that
\begin{eqnarray}
\IP^{x_0}\left[ X_t^j \notin S^j \right] \nonumber
 & \le & \sum_{y\notin S^j} \frac{c_1}{t^{d/2}}  m_j(y) \nonumber
\\
&\le & \sum_{x\leftrightarrow a^*_j} \left( \frac{c_1}{t^{d/2}}\cdot m_j(x)\right)+\frac{c_1}{t^{d/2}}\cdot m_j(a^*_j)  \nonumber
\\
(\text{Proposition }\ref{P1}) &\le & c_2\cdot 2^{j(d-1)}\cdot c_1 t^{-d/2}\cdot 2^{-jd}+c_1 t^{-d/2}\cdot c_2\cdot 2^{j(d-1)}\cdot 2^{-jd} \nonumber
\\
(t>(2^j\delta)^{-2/d}) &\le & 2c_1c_2\cdot \delta. \nonumber
\end{eqnarray}
This proves the desired result by letting $N_\delta$ be the selected $j_\delta$.
\end{proof}

\begin{thm}\label{T:main-result}
Given $x_0\in \cap_{j\ge 1}E^j$,
$\{X^j,\IP^{x_0},\, j\ge 1\}$ converges weakly to the BMD described in \eqref{def-BMD}  starting from $x_0$.
\end{thm}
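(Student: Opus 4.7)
The plan is to combine the C-tightness of $\{X^j\}$ established in Theorem \ref{T:tightness} with a martingale problem identification of any subsequential limit, and then appeal to uniqueness of BMD as characterized by its Dirichlet form \eqref{def-BMD}. By Theorem \ref{T:tightness}, along any subsequence of $\{X^j, \mathbb{P}^{x_0}\}$ there is a further weakly convergent subsequence $X^{j_n}\Rightarrow X$ in $\mathbf{D}([0,T], E, \rho)$ with continuous limit paths, so it suffices to show every such subsequential limit $X$ coincides in law with BMD started at $x_0$.

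To identify the limit, I would use the class $\mathcal{G}$ from \eqref{def-class-G}. For each $f\in\mathcal{G}$, since $X^j$ is a pure jump process with roadmap given by Proposition \ref{P:roadmap},
\begin{equation*}
M^{j,f}_t := f(X^j_t) - f(X^j_0) - \int_0^t \mathcal{L}^j f(X^j_s)\,ds
\end{equation*}
is a $\mathbb{P}^{x_0}$-martingale with respect to the natural filtration of $X^j$. By Lemma \ref{L:4.9}, $\mathcal{L}^j f$ is uniformly bounded by $C_7$ in $j$, so $M^{j,f}$ is a bounded martingale on $[0,T]$. The continuous mapping theorem (applied to the evaluation $\pi_t$ at continuity points of $X$, and noting $f$ is continuous on $E$ since $f$ is constant on $K$) yields $f(X^{j_n}_t)\to f(X_t)$ in distribution.

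The main obstacle will be establishing convergence of the compensator $\int_0^t \mathcal{L}^{j_n}f(X^{j_n}_s)\,ds$ to $\int_0^t \mathcal{L}f(X_s)\,ds$, where $\mathcal{L}f = (2d)^{-1}\Delta f$ on $\mathbb{R}^d\setminus K$ as in \eqref{def-L}. The difficulty is that $\mathcal{L}^j f$ only converges uniformly on the good set $S^j$, not globally. To handle this, I would split the integrand according to whether $X^{j_n}_s\in S^{j_n}$ or not. On $S^{j_n}$, Lemma \ref{L:4.9}(ii) provides uniform convergence at rate $O(2^{-j_n})$, and combined with the Skorokhod convergence and the continuity of $\mathcal{L}f$ off $K$, this part passes to the limit. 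For the complement, the integrand is bounded by $C_7$; given $\delta>0$, apply Lemma \ref{L:4.11} to get
\begin{equation*}
\mathbb{E}^{x_0}\left[\int_{(2^{j_n}\delta)^{-2/d}}^T \mathbf{1}_{\{X^{j_n}_s\notin S^{j_n}\}}\,ds\right] \le C_8 \delta T,
\end{equation*}
while the remaining interval $[0,(2^{j_n}\delta)^{-2/d}]$ has length shrinking to $0$ once $\delta$ is fixed. Letting $n\to\infty$ and then $\delta\to 0$ gives convergence in probability of the compensator, which upgrades to $L^1$ convergence by uniform boundedness. Passing to the limit in the martingale property of $M^{j_n,f}$, which is preserved under $L^1$-convergence of bounded martingales, we conclude that $M^f_t = f(X_t) - f(X_0) - \int_0^t \mathcal{L}f(X_s)\,ds$ is a martingale under the law of $X$ for every $f\in\mathcal{G}$.

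Finally, I would invoke uniqueness. Functions in $\mathcal{G}$ lie in $W^{1,2}(\mathbb{R}^d\setminus K)$ and are continuous on $E$ (since constant on $K$ they descend unambiguously to $a^*$), hence $\mathcal{G}\subset \mathcal{D}(\mathcal{E})$. A standard approximation argument using cutoffs and mollifiers shows $\mathcal{G}$ is $\mathcal{E}_1$-dense in $\mathcal{D}(\mathcal{E})$, so the martingale problem with test class $\mathcal{G}$ has at most one solution starting from $x_0$, namely the BMD determined by \eqref{def-BMD}. Therefore every subsequential weak limit equals BMD, and the full sequence $\{X^j,\mathbb{P}^{x_0}\}$ converges weakly to BMD starting from $x_0$, completing the proof of Theorem \ref{T:main-result}.
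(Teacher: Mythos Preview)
Your overall architecture---C-tightness, martingale problem identification via the class $\mathcal{G}$, and uniqueness---matches the paper's proof exactly, and your treatment of the compensator convergence (splitting over $S^{j_n}$ and its complement, invoking Lemma \ref{L:4.9} and Lemma \ref{L:4.11}) is precisely the content the paper imports from \cite[Theorem 5.3]{Lou2}.

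The gap is in your uniqueness step. You assert that $\mathcal{G}$ is $\mathcal{E}_1$-dense in $\mathcal{D}(\mathcal{E})$ and that this ``so'' implies the martingale problem $(\mathcal{L},\mathcal{G})$ has at most one solution. Neither claim is as routine as you suggest. First, functions in $\mathcal{G}$ are constant on all of $K$, hence have vanishing gradient in a neighborhood of $\partial K$; approximating a generic $f\in\mathcal{D}(\mathcal{E})$ (which is merely continuous across $a^*$ and may have nonzero normal derivative on $\partial K$) by such functions in the $\mathcal{E}_1$-norm is not a one-line mollifier argument. Second, and more fundamentally, $\mathcal{E}_1$-density of a test class in the \emph{form} domain does not by itself yield well-posedness of the associated martingale problem: what is needed is that $\mathcal{G}$ be a core for the \emph{generator} $(\mathcal{L},\mathcal{D}(\mathcal{L}))$ in the graph norm, or some substitute. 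The paper does not attempt either of these; instead it proves directly that BMD is strong Feller (via the resolvent decomposition $G^\alpha f = G^\alpha_{E\setminus\{a^*\}}f + G^\alpha f(a^*)\,\mathbb{E}^{\,\cdot}[e^{-\alpha\sigma_{a^*}}]$ and regularity of $\partial K$), which forces the martingale problem $(\mathcal{L},\mathcal{D}(\mathcal{L}))$ to be well-posed, and then uses a bp-closure argument (\cite[Lemma 3.4.18]{AB}) to transfer uniqueness from $\mathcal{D}(\mathcal{L})$ back to $\mathcal{G}$. You should replace your density sketch with this strong Feller route, or else supply a genuine proof that $\mathcal{G}$ is a core for the generator.
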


\begin{proof}
Since the laws of $\{X^j\}_{j\ge 1}$ are C-tight in $\mathbf{D}([0, T], E, \rho)$, any sequence has a weakly convergent subsequence supported on the set of continuous paths. Denote by $\{X^{j_l}: l\ge 1\}$ any such  weakly convergent subsequence, and denote by $Y$ its weak limit which is  continuous. By  Skorokhod representation theorem (see, e.g.,  \cite[Chapter 3, Theorem 1.8]{EK}), we may assume that $\{X^{j_l},l\ge 1\}$ as well as $Y$ are defined on a common probability space $(\Omega, \mathcal{F}, \IP)$, so that  $\{X^{j_l},l\ge 1\}$  converges almost surely to $Y$ in the Skorokhod topology.

For every $t\in [0. T]$, we set $\mathcal{M}_t^{j_l} :=\sigma(X^{j_l}_s, s\le t)$  and $\mathcal{M}_t :=\sigma(Y_s, s\le t)$. It is obvious that $\mathcal{M}_t\subset \sigma\{\mathcal{M}_t^{k_j}: j\ge 1\}$. By the same argument as that for \cite[Theorem 5.3]{Lou2} with the use of Lemma 5.2 in \cite{Lou2} being replaced with Lemma \ref{L:4.11}, it can be shown that  $(Y, \IP^{x_0})$ is indeed a solution to the   $\mathbf{D}([0, T], E, \rho)$  martingale problem $(\mathcal{L}, \mathcal{G})$   with respect to the filtration $\{\mathcal{M}_t\}_{t\ge 0}$.

Next we claim that the BMD associated with the Dirichlet form described by \eqref{def-BMD}  is a strong Feller process. To see this, we denote by $\{G^\alpha\}_{\alpha>0}$ the resolvent operators of $X$, and denote by $\{G^\alpha_{E\backslash \{a^*\}}\}_{\alpha>0}$ the resolvent operators of $X^{E\backslash \{a^*\}}$, the part process of $X$ killed hitting $a^*$, which has the same distribution as regular Brownian motion on $\IR^d$ killed upon hitting $K$.  By strong Markov property, for $x\in E$, for every bounded measurable function $f: E\rightarrow \IR$,
\begin{eqnarray}
G^\alpha f(x) &= & G^\alpha_{E\backslash \{a^*\}}+G^\alpha f(a^*)\cdot\int_0^\infty e^{-\alpha s}\; \IP^x [\sigma_{\{a^*\}}\in ds]
\end{eqnarray}
In the right hand side above, the map $x\mapsto \IP^x [\sigma_{\{a^*\}}\in ds]$ is continuous because every point on the boundary of $K\subset \IR^d$ is regular for $K$. Therefore we concludee that $X$ is a strong Feller process because $G^\alpha(b\mathcal{B}(E))\subset C(E)$. In view of \cite[\S1.5]{Chen1}, the infinitesimal generator of $X$ can be described by $(\mathcal{L}, \mathcal{D(L)} )$, where $u\in \mathcal{F}$ is in $\mathcal{D(L)}$ if there exists $f\in L^2(E)$ such that
\begin{eqnarray}
\frac{1}{2}\int_{E\backslash \{a^*\}} \nabla u(x)\nabla v(x) dm=  \int_{E\backslash \{a^*\}} f(x)\cdot v(x)dm, \quad \text{for all }v\in \mathcal{F}.
\end{eqnarray}
It also holds that $\mathcal{L}u=f=\frac{1}{2}\Delta u$ for $u\in \mathcal{D(L)}$. It then is clear that the bp-closure of the graph of $\mathcal{(L, D(L))}$ is contained in the bp-closure of the graph of $\mathcal{(L, G)}$. By \cite[Lemema 3.4.18]{AB}, any solution to the martingale problem $(\mathcal{L}, \mathcal{G})$ must be a solution to the martingale problem $\mathcal{(L, G)}$.  Since $X$ is a strong Feller process, the martingale problem $\mathcal{(L, D(L))}$ must be well-posed with its unique solution being $X$. Therefore the martingale problem $\mathcal{(L, G)}$ must be well-posed with its unique solution being $X$. This means that  $X$ is the sequencial limit of any weakly convergent subsequence of $\{X^j\}_{j\ge 1}$, the proof is complete. 
\end{proof}

\begin{rmk}
In view of Remark \ref{R:4.10}, the same argument can show that  $\{X^j,\IP^{a^*_j},\, j\ge 1\}$ converges weakly to the BMD given by \eqref{def-BMD}  starting from $a^*$.
\end{rmk}

\vskip 0.3truein

\noindent {\bf Shuwen Lou}

\smallskip \noindent
Department of Mathematics and Statistics, Loyola University Chicago,
\noindent
Chicago, IL 60660, USA

\noindent
E-mail:  \texttt{slou1@luc.edu}

 \end{document}